\setlist[enumerate,1]{label=(\arabic*), ref=(\arabic*)}
\setlist[enumerate,3]{label=(\roman*), ref=(\roman*)}
\theoremstyle{plain}
\newtheorem{theorem}{Theorem}[section]
\newtheorem{lemma}[theorem]{Lemma}
\newtheorem{corollary}[theorem]{Corollary}
\newtheorem{conjecture}[theorem]{Conjecture}
\newtheorem{problem}[theorem]{Problem}
\newtheorem*{claim*}{Claim}
\theoremstyle{definition}
\newtheorem{definition}[theorem]{Definition}
\newtheorem{remark}[theorem]{Remark}
\newtheorem{example}[theorem]{Example}
\newcommand{\calC}{\mathcal{C}}
\newcommand{\defeq}{\coloneqq}
\title{Reconstructing hypergraph matching polynomials}
\author{Donggyu Kim%
        \thanks{Department of Mathematical Sciences, KAIST, Daejeon, South~Korea and Discrete Mathematics Group, Institute for Basic Science (IBS), Daejeon, South~Korea. E-mail: {\ttfamily donggyu@kaist.ac.kr}. Supported by the Institute for Basic Science (IBS-R029-C1).}
\and Hyunwoo Lee%
        \thanks{Department of Mathematical Sciences, KAIST, Daejeon, South Korea and Extremal Combinatorics and Probability Group (ECOPRO), Institute for Basic Science (IBS), Daejeon, South~Korea.
        E-mail: {\ttfamily hyunwoo.lee@kaist.ac.kr}. Supported by the National Research Foundation of Korea (NRF) grant funded by the Korea government(MSIT) No. RS-2023-00210430, and the Institute for Basic Science (IBS-R029-C4).}
}
\begin{document}
\maketitle

\begin{abstract}
    By utilizing the recently developed hypergraph analogue of Godsil's identity by the second author,
    we prove that for all $n \geq k \geq 2$, one can reconstruct the matching polynomial of an $n$-vertex $k$-uniform hypergraph from the multiset of all induced sub-hypergraphs on $\lfloor \frac{k-1}{k}n \rfloor + 1$ vertices. This generalizes the well-known result of Godsil on graphs in 1981 to every uniform hypergraph. As a corollary, we show that for every graph $F$, one can reconstruct the number of $F$-factors in a graph under analogous conditions.
    We also constructed examples that imply the number $\lfloor \frac{k-1}{k}n \rfloor + 1$ is the best possible for all $n\geq k \geq 2$ with $n$ divisible by $k$. 
\end{abstract}


\section{Introduction}\label{sec:intro}

Deriving the structural properties of an unknown graph from a given set of partial information about the graph has been extensively studied in graph theory. One celebrated long-standing open problem in this field is the Reconstruction Conjecture, posed by Kelly~\cite{kelly-reconstruction} in $1957$ and Ulam~\cite{Ulam-reconstruction} in $1960$, that states every unlabeled $n$-vertex graph can be uniquely determined from the multiset of its induced subgraphs on $n-1$ vertices.
It was proven by Bollob\'{a}s~\cite{Bollobas} that almost all graphs are reconstructible from its $(n-1)$-vertex induced subgraphs. Meanwhile, there are only a few explicit graph classes that are known to satisfy the conjecture such as trees~\cite{Reconstruction-survey}, unit interval graphs~\cite{unitinterval-reconstructible}, and separable graphs without end vertices~\cite{separable-reconstructible}. On the other hand, the Reconstruction Conjecture is widely open. For more information about the conjecture, we recommend seeing a nice survey of Harary~\cite{Reconstruction-survey}.

Since the Reconstruction Conjecture demands to completely determine the whole structure of an unknown graph, verifying it requires exploring various structural properties of graphs, which makes the conjecture notoriously difficult. Hence, one can relax the conjecture to determine other graph parameters --- rather than the entire structure of the given graph --- such as the number of perfect matchings. Indeed, for the number of perfect matchings, one can surpass the original condition of the Reconstruction Conjecture. In 1977, Tutte~\cite{tutte-matching} showed that the number of perfect matchings of the given $n$-vertex graph is determined from the set of all induced subgraphs on $n-1$ vertices.
This result subsequently extended and strengthened to the matching polynomial by Godsil~\cite{Godsil-graph} in $1981$, showing that the number of matchings of given sizes can be determined from the set of induced subgraphs on roughly half of the vertices. To state the Godsil's result rigorously, we introduce the following notions.

\begin{definition}
    Let $H$ be an $n$-vertex $k$-uniform hypergraph ($k$-graph). The \emph{matching polynomial} of~$H$ is given by  
    $$
        m_k(H, x) \defeq \sum_{i = 0}^{\lfloor n/k \rfloor} (-1)^i p(H, i)x^{n - ki}, 
    $$
    where $p(H, i)$ denotes the number of distinct matchings of $i$ edges in $H$ and we let $p(H, 0) = 1$.
\end{definition}

\begin{definition}
    Let $k\geq 2$ and $t \geq 0$ be integers and let $H$ be an unlabeled $k$-graph. We denote by $\mathcal{C}(H, t)$ the multiset of induced $t$-vertex sub-$k$-graphs of $H$. 
\end{definition}
In this paper, we allow hypergraphs to have multi-edges.
The following is Godsil's reconstruction theorem for matching polynomials of graphs.

\begin{theorem}[{\cite[Theorem 4.1]{Godsil-graph}}]\label{thm:godsil-reconstruction}
    Let $G$ be an $n$-vertex graph. Then the matching polynomial $m_2(G, x)$ is uniquely determined from $\mathcal{C}(G, \lfloor n/2 \rfloor + 1).$
\end{theorem}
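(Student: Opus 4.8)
Write $m$ for $m_2$. The starting point is Kelly's Lemma: $\mathcal{C}(G,t)$ with $t=\lfloor n/2\rfloor+1$ determines $\mathcal{C}(G,s)$ for every $s\le t$, hence the number of induced copies in $G$ of each graph on at most $t$ vertices, and---by summing such counts over the bipartite graphs joining two disjoint parts---also the number of disjoint pairs of vertex sets with prescribed induced subgraphs whenever the two parts span at most $t$ vertices in total. I would set up this bookkeeping first.

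The ``shallow'' coefficients of $m(G,x)$ are then immediate: a matching of $i$ edges occupies $2i$ vertices, so $\sum_{|S|=s}p(G[S],i)=\binom{n-2i}{s-2i}\,p(G,i)$, and taking $s=t$ recovers $p(G,i)$ for every $i$ with $2i\le t$. Equivalently, the identity $m^{(j)}(G,x)=j!\sum_{|S|=n-j}m(G[S],x)$ recovers all derivatives $m^{(j)}(G,x)$ with $j\ge n-t$, that is, all coefficients of $m(G,x)$ of degree at least $\lceil n/2\rceil-1$.

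What remains is $p(G,i)$ for $2i>t$, which count matchings whose support exceeds every $t$-set; supplying these is the whole content of the theorem. Here I would invoke Godsil's identity: for distinct $u,v\in V(G)$,
\[
 m(G-u,x)\,m(G-v,x)-m(G,x)\,m(G-u-v,x)=\sum_{P}m\bigl(G-V(P),x\bigr)^{2},
\]
the sum over all paths $P$ from $u$ to $v$ in $G$. Its right-hand side has terms involving matching polynomials $m(G-V(P),x)$ of path-deleted subgraphs, which live on few vertices once $P$ is long. The plan is to combine this with $\sum_u m(G-u,x)=m'(G,x)$ and $\sum_{\{u,v\}}m(G-u-v,x)=\tfrac12 m''(G,x)$, sum the identity over a well-chosen family of pairs (and, where needed, over pairs inside induced subgraphs), expand everything as a polynomial in $x$, and use the coefficients of $m(G,x)$ already recovered to solve for the remaining ones.

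The main obstacle is precisely this. A single application of Godsil's identity trades the small subgraphs $G-V(P)$ against the large subgraphs $G-u$, $G-v$ (and, after summing over pairs, against $\sum_u m(G-u,x)^2$), and even the number of long paths $P$ realising a given $G-V(P)$ depends on subgraphs on more than $t$ vertices. So the real task is to arrange a recursive or localized scheme in which these over-large contributions cancel out or are themselves handled, leaving the unknown $p(G,i)$ expressible through data available from $\mathcal{C}(G,\lfloor n/2\rfloor+1)$---concretely, bridging the gap from matchings of size about $n/4$, which is all that the elementary counting reaches, up to size $n/2$. I expect the coefficient counting perfect matchings (the case $n$ even) to be the last and most delicate, requiring a separate Tutte-type argument---consistent with the bound $\lfloor n/2\rfloor+1$ being sharp exactly when $n$ is even.
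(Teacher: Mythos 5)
Your proposal does not close. The elementary counting in your second paragraph correctly recovers $p(G,i)$ for $2i\le \lfloor n/2\rfloor+1$, i.e.\ matchings of size up to roughly $n/4$; but the remaining coefficients are, as you say yourself, ``the whole content of the theorem,'' and for these you offer only a plan built on the Christoffel--Darboux-type identity $m(G-u)m(G-v)-m(G)m(G-u-v)=\sum_P m(G-V(P))^2$, followed by an explicit admission that you do not know how to make the over-large terms cancel. That identity is not the engine of Godsil's proof, and no amount of summing it over pairs will isolate the deep coefficients: its left-hand side already contains $m(G,x)$ and the $n-1$-vertex polynomials $m(G-u,x)$, which are exactly the objects you cannot access from $\mathcal{C}(G,\lfloor n/2\rfloor+1)$. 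The final speculation that the perfect-matching coefficient needs a separate Tutte-type argument is also off the mark; in the correct proof all coefficients are recovered uniformly.

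The mechanism you are missing (which is how the paper proves the general Theorem~\ref{thm:main}, and specializes to $k=2$ to give this statement) is the path-tree identity $m(G-v,x)/m(G,x)=m(T(G,v)-v,x)/m(T(G,v),x)$ together with the walk-generating-function interpretation: since $m'(G,x)=\sum_v m(G-v,x)$, the series $\frac{1}{x}\,\frac{m'(G,x^{-1})}{m(G,x^{-1})}=\sum_{\ell\ge 0}(\lambda_1^\ell+\cdots+\lambda_n^\ell)x^\ell$ equals the generating function counting closed \emph{tree-like} walks in $G$ (closed walks in the path-trees $T(G,v)$, summed over $v$). The decisive locality fact is that a closed tree-like walk of length $\ell$ visits at most $\lfloor \ell/2\rfloor+1$ distinct vertices of $G$, so for every $\ell\le n$ these walk counts are determined, by a Kelly-type double count, from $\mathcal{C}(G,\lfloor n/2\rfloor+1)$. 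This yields the power sums $\lambda_1^\ell+\cdots+\lambda_n^\ell$ for $1\le\ell\le n$, and Newton's identities then reconstruct $m(G,x)$ in full --- including the perfect-matching coefficient --- in one stroke. Without this (or an equivalent) bridge from local data to the power sums of the roots, your argument stalls at matchings of size about $n/4$.
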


One notable remark on \Cref{thm:godsil-reconstruction} is that to recover all information about the matchings, we only need to see the induced subgraphs on roughly a half-fraction of the vertices, which is optimal due to a result by Spinoza and West~{\cite[(4.3)]{SW2019}}.  

Clearly, one of the possible generalizations of \Cref{thm:godsil-reconstruction} is its hypergraph analogue. However, a lot of results in graph theory do not hold for hypergraphs. For example, the hypergraph analogue of the Reconstruction Conjecture turned out to be false for uniform hypergraphs with every uniformity $k\ge 3$ by Kocay~\cite{Kocay}. Our main theorem shows that, surprisingly, the hypergraph generalization of \Cref{thm:godsil-reconstruction} holds.

\begin{theorem}\label{thm:main}
     Let $H$ be an $n$-vertex $k$-graph. Then the matching polynomial $m_k(H, x)$ is uniquely determined from $\mathcal{C}(H, \lfloor \frac{k-1}{k} n \rfloor + 1).$
\end{theorem}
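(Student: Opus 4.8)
The plan is to reconstruct $m_k(H,x)$ by recovering the power sums of its roots, combining Kelly's Lemma with the hypergraph analogue of Godsil's identity.

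I would start with two reductions that require no new ideas. First, by Kelly's Lemma, $\mathcal{C}(H,t)$ determines $\mathcal{C}(H,\ell)$ for every $\ell\le t$ (each $\ell$-subset of $V(H)$ lies in exactly $\binom{n-\ell}{t-\ell}$ members of $\mathcal{C}(H,t)$), and hence the number of (not necessarily induced) copies in $H$ of any fixed $k$-graph $F$ with $v(F)\le t$: such a copy spans exactly $v(F)$ vertices, so it is seen exactly once, namely in the member induced on its own vertex set. Second, since $m_k(H,\omega x)=\omega^{\,n}m_k(H,x)$ for every $\omega$ with $\omega^{k}=1$, the multiset of roots of $m_k(H,x)$ is invariant under multiplication by $k$-th roots of unity; consequently its power sums $p_r$ vanish whenever $k\nmid r$, and --- $m_k(H,x)$ being monic of degree $n$ with all nonzero coefficients located at $x^{\,n-ki}$ --- Newton's identities determine $m_k(H,x)$ completely from the values $p_k,p_{2k},\dots,p_{k\lfloor n/k\rfloor}$.

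The heart of the argument is then the hypergraph analogue of Godsil's identity: for a vertex $u$ of $H$ it expands
\[
\frac{m_k(H-u,x)}{m_k(H,x)}=\sum_{m\ge 0}c_m(u)\,x^{-1-km},
\]
where $c_m(u)$ is a nonnegative-integer count of ``tree-like walks of size $m$'' issuing from $u$, each of which is carried by a connected sub-$k$-graph of $H$ on at most $1+m(k-1)$ vertices (the $k$-uniform counterpart of the fact that a closed tree-like walk of length $2m$ in a graph spans at most $m+1$ vertices). Summing over $u$ and using the elementary identity $\sum_{u}m_k(H-u,x)=\tfrac{d}{dx}m_k(H,x)$ turns the left-hand side into $\tfrac{d}{dx}\log m_k(H,x)=\tfrac{n}{x}+\sum_{r\ge 1}p_r\,x^{-1-r}$, so that $C_m:=\sum_{u}c_m(u)$ equals $p_{km}$ for all $m\ge 1$. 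Here the choice of $t$ becomes decisive: for $1\le m\le\lfloor n/k\rfloor$ one has
\[
1+m(k-1)\le 1+(k-1)\Big\lfloor\tfrac nk\Big\rfloor\le \Big\lfloor\tfrac{k-1}{k}n\Big\rfloor+1=t,
\]
so every tree-like walk contributing to $C_m$ lives inside a $k$-graph on at most $t$ vertices. Writing $C_m$ as a finite $\mathbb{Z}_{\ge 0}$-combination of copy-counts of connected $k$-graphs on at most $t$ vertices, the first reduction shows that $C_m=p_{km}$ is reconstructible from $\mathcal{C}(H,t)$ for each $1\le m\le\lfloor n/k\rfloor$; by the second reduction this pins down $m_k(H,x)$, completing the proof.

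I expect the main obstacle to be the correct formulation and application of the hypergraph Godsil identity --- precisely what a ``tree-like walk'' is in the $k$-uniform setting, why the expansion above holds, and, most importantly for the stated bound, why a size-$m$ tree-like walk is supported on at most $1+m(k-1)$ vertices; this last fact is exactly what makes $\lfloor\tfrac{k-1}{k}n\rfloor+1$ the right threshold. A secondary, more routine point is to assemble $C_m$ as an honest finite integer combination of sub-$k$-graph counts on at most $t$ vertices so that Kelly's Lemma applies termwise; after that, only standard symmetric-function manipulations remain.
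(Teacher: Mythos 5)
Your proposal is correct and follows essentially the same route as the paper: logarithmic derivative of $m_k(H,x)$ to get power sums, the hypergraph Godsil identity to express $\sum_u m_k(H-u,x)/m_k(H,x)$ as a generating function for walks supported on at most $1+m(k-1)$ vertices, and a Kelly-type double count to recover these counts from $\mathcal{C}(H,\lfloor\frac{k-1}{k}n\rfloor+1)$. The paper implements the ``tree-like walk'' step you flag as the main obstacle by passing to the $k$-walk-tree $T(H,v)$, identifying its matching polynomial with the characteristic polynomial of an auxiliary digraph, and bounding the vertex support of closed walks there; otherwise the arguments coincide.
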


We also construct examples showing that the number $\lfloor \frac{k-1}{k}n \rfloor + 1$ in \Cref{thm:main} is the best possible for every $n \geq k \geq 2$ with $n$ divisible by $k$. 
The constructions extend the fact $\calC(P_{\ell+1} \cup P_{\ell-1},\ell) = \calC(P_{\ell} \cup P_{\ell},\ell)$ by~\cite{SW2019}, which are obtained by carefully joining two linear $k$-paths. Our proof provides an explicit combinatorial bijection, which also gives an alternative proof of the result of~\cite{SW2019}.
The constructions are presented in \S\ref{sec:example}.

Matchings in hypergraphs can model various mathematical objects.
One example that can be encoded into hypergraph matchings is graph tilings. For a graph $F$, we refer to a collection of vertex-disjoint copies of $F$ in a graph as an \emph{$F$-tiling}. If an $F$-tiling covers all vertices of the host graph, we call it \emph{perfect}.\footnote{Note that a perfect $F$-tiling is equivalent to an \emph{$F$-factor}.} To incorporate all the information of the $F$-tilings of given sizes, we define \emph{$F$-tiling polynomial} as follows.

\begin{definition}
    Let $F$ and $G$ be a $k$-vertex graph and an $n$-vertex graph, respectively. The \emph{$F$-tiling polynomial} of $G$ is given by
    \[
        m_F(G, x) \defeq \sum_{i = 0}^{\lfloor n/k \rfloor} (-1)^i p_F(G, i) x^{n - ki},
    \]
    where $p_F(G, i)$ denotes the number of vertex-disjoint $i$ copies of $F$ in $G$ and we let $p_F(G, 0) = 1$.
\end{definition}

If we construct a $k$-graph $H$ from a given graph $G$ by letting every $F$-copy in $G$ serve as a hyperedge of $H$, then the matchings of $H$ precisely correspond to the $F$-tilings of $G$. 
Thus, we obtain the following corollary immediately.

\begin{corollary}\label{cor:graph-factor}
    Let $n \geq k \geq 2$ be integers.
    Let $F$ and $G$ be a $k$-vertex graph and an $n$-vertex graph, respectively.
    Then the $F$-tiling polynomial $m_F(G, x)$ is uniquely determined from $\calC(G,\lfloor \frac{k-1}{k} n \rfloor + 1)$.
\end{corollary}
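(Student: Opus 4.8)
The plan is to deduce \Cref{cor:graph-factor} directly from \Cref{thm:main} via the correspondence already sketched in the paragraph preceding the statement. Given a $k$-vertex graph $F$ and an $n$-vertex graph $G$, I would define a $k$-uniform hypergraph $H = H(G,F)$ on the same vertex set $V(G)$ whose edge multiset consists of one hyperedge $V(K)$ for each subgraph $K$ of $G$ isomorphic to $F$ (counting with multiplicity if several distinct $F$-copies span the same $k$-set, which is exactly why the paper allows multi-edges). Under this construction, a set of $i$ pairwise vertex-disjoint hyperedges of $H$ is the same thing as an $i$-copy $F$-tiling of $G$, so $p_H(i) = p_F(G,i)$ for every $i$, and hence $m_k(H,x) = m_F(G,x)$ as polynomials.

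The second step is to check that this construction commutes with taking induced sub-structures: for any vertex subset $S \subseteq V(G)$, the induced sub-hypergraph $H[S]$ equals $H(G[S], F)$. This holds because an $F$-copy of $G$ lies inside $G[S]$ if and only if all $k$ of its vertices lie in $S$, which is precisely the condition for the corresponding hyperedge to survive in $H[S]$. Consequently, the multiset $\calC(H, t)$ is obtained from $\calC(G,t)$ by applying the map $G' \mapsto H(G',F)$ to each member, so $\calC(G,t)$ determines $\calC(H,t)$ for every $t$.

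Combining the two steps: from $\calC(G, \lfloor \frac{k-1}{k} n \rfloor + 1)$ we can reconstruct $\calC(H, \lfloor \frac{k-1}{k} n \rfloor + 1)$, and then \Cref{thm:main} applied to the $n$-vertex $k$-graph $H$ yields $m_k(H,x) = m_F(G,x)$. Since $n \geq k$ is exactly the hypothesis needed to invoke \Cref{thm:main}, this completes the argument.

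There is essentially no obstacle here — the only point requiring a little care is the bookkeeping of multiplicities, i.e. making sure that distinct copies of $F$ in $G$ that happen to occupy the same $k$-set each contribute a separate (parallel) hyperedge, so that $p_H(i)$ counts $F$-tilings rather than tilings by distinct vertex $k$-sets; this is where the paper's convention that hypergraphs may have multi-edges is used, and it is consistent with the fact that \Cref{thm:main} is stated for hypergraphs with multi-edges. Everything else is a routine unwinding of definitions.
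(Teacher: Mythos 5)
Your proposal is correct and is exactly the argument the paper intends: build the auxiliary $k$-graph $H$ whose hyperedges are the $F$-copies of $G$ (with parallel edges for coincident vertex sets), note that $m_k(H,x)=m_F(G,x)$ and that $H[S]=H(G[S],F)$, and apply \Cref{thm:main}. The paper states this construction in the sentence preceding the corollary and calls the conclusion immediate; your write-up just makes the same deduction explicit.
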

In particular, \Cref{cor:graph-factor} implies the number of perfect $F$-tilings in $G$ can be determined from $\calC(G, \lfloor\frac{k-1}{k}n \rfloor + 1)$.

\paragraph{Organization.}
The rest of the paper is organized as follows.
In \S\ref{sec:prelim}, we establish the necessary notations and terminologies. We also review the basic properties of the matching polynomials of hypergraphs and the characteristic polynomials of digraphs.
In \S\ref{sec:treelike-walk}, we examine the connections between these two polynomials by making use of the hypergraph Godsil's identity~(\Cref{thm: Lee identity}), which sets the preceding stage for the proof of our main result (\Cref{thm:main}) in \S\ref{sec:proof}.
Next, in \S\ref{sec:example}, we demonstrate that the bound in \Cref{thm:main} is as best as possible by presenting explicit examples.
Finally, in \S\ref{sec:concluding}, we conclude with several observations and suggest possible avenues for future work.


\section{Preliminaries}\label{sec:prelim}

\subsection{Notations and terminologies}\label{subsec:notations}

For each positive integer $n$, let $[n]:= \{1,2,\ldots,n\}$.
Throughout the article, we assume that every hypergraph is equipped with a (strict) linear ordering~$\prec$ of its vertices whenever it is denoted by $H$. Since graphs are $2$-uniform hypergraphs, we consider graphs to be hypergraphs. We denote by $V(H)$ and $E(H)$ the set of vertices of the hypergraph $H$ and the set of edges of $H$, respectively.
For a (hyper)edge~$e$, we denote by $V(e)$ the set of vertices incident with~$e$. We write $H_1 \cong H_2$ if the two hypergraphs $H_1$ and $H_2$ are isomorphic. 

For a vertex subset $X\subseteq V(H)$ of a hypergraph $H$, we denote by $H[X]$ the induced sub-hypergraph of $H$ on the vertex set $X$. 
Let $H-X$ be the hypergraph obtained from $H$ by removing $X$.
We simply write it as $H-v$ if $X=\{v\}$.
Similarly, for an edge $e\in E(H)$, we denote by $H \setminus e$ the hypergraph obtained from $H$ by removing the edge $e$.
We analogously define $D - X$, $D-v$, and $D\setminus e$ for a digraph~$D$.

A hypergraph is \emph{linear} if it has no pair of distinct hyperedges that share more than one vertex. 
A \emph{Berge-walk} in a hypergraph is a sequence $(v_0,e_1,v_1,\dots,e_\ell,v_\ell)$ of vertices $v_0,\ldots,v_\ell$ and edges $e_1,\ldots,e_\ell$ such that $v_{i-1}$ and $v_i$ are incident with $e_i$ for each $i=1,2,\ldots,\ell$.
The Berge-walk is called a \emph{Berge-path} if all vertices $v_0,\ldots,v_\ell$ and all edges $e_1,\ldots,e_\ell$ are distinct.
The Berge-walk is called a \emph{Berge-cycle} if all edges are distinct and all vertices except for $v_0=v_\ell$ are distinct.
Note that, although the edges $e_i$ must be distinct in a Berge-cycle/path, it is possible for their vertex sets to coincide, i.e., $V(e_i) = V(e_j)$ for some $i\ne j$.
A \emph{linear $k$-path} is a linear $k$-graph that forms a Berge-path, and a \emph{$k$-tree} is a $k$-graph that contains no Berge-cycle. We note that by the definition of the Berge-cycle, the $k$-tree is a linear $k$-graph.


\subsection{Characteristic polynomials of digraphs}

\begin{definition}
    The \emph{adjacency matrix} of a digraph $D$ (with neither loops nor multi-arcs) is a $\{0,1\}$-matrix $A=A(D)$ over the real field such that $A_{uv} = 1$ if and only if $D$ has an arc with tail~$u$ and head~$v$.
    The \emph{characteristic polynomial} $\chi(D,x)$ of $D$ is that of the adjacency matrix~$A$, i.e.,
    \[
        \chi(D,x) = \det(xI - A).
    \]
\end{definition}

\begin{lemma}\label{lem: digraph reduction}
    Let $D$ be a digraph and let $D'$ be a digraph obtained by deleting every arc incident with a vertex $v$.
    If $v$ is a source or a sink, then $\chi(D,x) = \chi(D',x)$.
    \qed
\end{lemma}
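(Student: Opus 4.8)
The plan is to show that each side equals $x\,\chi(D-v,x)$ by a single cofactor expansion, after recording the effect of the source/sink hypothesis on the adjacency matrix.

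First I would unpack the hypothesis at the level of $A=A(D)$. If $v$ is a source, then no arc has head $v$, so $A_{uv}=0$ for every vertex $u\ne v$; since $D$ has no loops we also have $A_{vv}=0$, so the entire column of $A$ indexed by $v$ vanishes. Dually, if $v$ is a sink, the entire row of $A$ indexed by $v$ vanishes. In either case the matrix $xI-A$ has a distinguished line (a row or a column) all of whose entries are $0$ except for the diagonal entry, which equals $x$.

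Next I would expand $\det(xI-A)$ along that distinguished line. Only one term survives, namely $x$ times the principal minor obtained by deleting the row and column indexed by $v$; but that minor is precisely $\det(xI'-A(D-v))$ for the identity matrix $I'$ of the appropriate smaller size. Hence $\chi(D,x)=x\,\chi(D-v,x)$. Finally, for $D'$: deleting every arc incident with $v$ leaves $v$ an isolated vertex, so after reordering the vertices $A(D')$ is block diagonal with a $1\times 1$ zero block for $v$ and the block $A(D-v)$ on the remaining vertices; therefore $\chi(D',x)=x\,\chi(D-v,x)$ as well, and the two polynomials coincide.

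There is no genuine obstacle here; the only subtlety worth flagging explicitly is the no-loops assumption, which is what forces $A_{vv}=0$ and hence makes the chosen line of $xI-A$ have exactly one nonzero entry, so that the cofactor expansion produces the clean factor $x$.
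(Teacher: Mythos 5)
Your proof is correct, and it is the standard argument: the paper itself omits the proof of this lemma (stating it with a bare \qed as immediate), and the cofactor expansion along the zero row or column of $A$, combined with the observation that $v$ is isolated in $D'$, is exactly the computation being taken for granted. Both sides indeed equal $x\,\chi(D-v,x)$, and your remark about the no-loops convention forcing $A_{vv}=0$ is the right detail to flag.
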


\begin{lemma}[folklore]\label{lem:digraph-closedwalk}
    The generating function of closed walks in a digraph $D$ which starts at a vertex $v$ is 
    \[
        \frac{1}{x} \cdot \frac{\chi(D-v, x^{-1})}{\chi(D,x^{-1})}.
    \]
\end{lemma}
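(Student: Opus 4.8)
The plan is to recognise the generating function as a single diagonal entry of the resolvent $(I-xA)^{-1}$, evaluate that entry by Cramer's rule, and then convert the two determinants that appear into characteristic polynomials by homogenising.

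First I would fix notation: let $n = |V(D)|$ and $A = A(D)$. The number of closed walks of length $\ell$ that start (and end) at $v$ is exactly $(A^\ell)_{vv}$, with the convention $(A^0)_{vv}=1$ for the trivial walk. Hence the generating function in question is
\[
    \sum_{\ell \ge 0} (A^\ell)_{vv}\,x^\ell \;=\; \Bigl(\sum_{\ell \ge 0} x^\ell A^\ell\Bigr)_{\!vv} \;=\; \bigl((I - xA)^{-1}\bigr)_{vv},
\]
where the inverse is interpreted as a formal power series; this is legitimate because $I - xA$ has constant term $I_n$ over the ring of formal power series (equivalently, the Neumann series converges for $|x|$ small).

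Next I would apply Cramer's rule for a single diagonal entry of an inverse: for any invertible matrix $M$ one has $\bigl(M^{-1}\bigr)_{vv} = \det(M_{\widehat v})/\det(M)$, where $M_{\widehat v}$ denotes $M$ with its $v$-th row and column deleted. Taking $M = I_n - xA(D)$ and observing that deleting the $v$-th row and column of $A(D)$ yields precisely $A(D-v)$, this gives
\[
    \bigl((I - xA)^{-1}\bigr)_{vv} \;=\; \frac{\det\bigl(I_{n-1} - x\,A(D-v)\bigr)}{\det\bigl(I_{n} - x\,A(D)\bigr)}.
\]
Finally, I would homogenise: for any $m \times m$ matrix $B$, $\det(I_m - xB) = x^m\det(x^{-1}I_m - B) = x^m\,\chi(B,x^{-1})$. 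Applying this with $m=n$, $B=A(D)$ in the denominator and $m=n-1$, $B=A(D-v)$ in the numerator turns the ratio into $\dfrac{x^{n-1}\chi(D-v,x^{-1})}{x^{n}\chi(D,x^{-1})} = \dfrac1x\cdot\dfrac{\chi(D-v,x^{-1})}{\chi(D,x^{-1})}$, which is the claimed expression.

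I do not expect a real obstacle here, as this is a classical computation; the only points that need care are the bookkeeping of the substitution $x \mapsto x^{-1}$ together with the powers of $x$ produced by homogenisation, and checking that $x^n\chi(D,x^{-1})$ has nonzero constant term — indeed it equals $\det(I_n - xA)$, whose constant term is $1$ — so that the right-hand side is a well-defined formal power series and may legitimately be identified with the walk generating function.
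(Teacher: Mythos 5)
Your proposal is correct and follows essentially the same route as the paper: identify the generating function with the $(v,v)$-entry of $(I-xA)^{-1}$ and extract the ratio of characteristic polynomials via the cofactor/Cramer formula (the paper factors $(I-xA)^{-1}=x^{-1}(x^{-1}I-A)^{-1}$ first, whereas you homogenise afterwards, but this is the same computation). No issues.
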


\begin{proof}[Proof of \Cref{lem:digraph-closedwalk}]
    Denote $A := A(D)$.
    The generating function of closed walks starting from $v$ is equal to the $(v,v)$-entry of $(I-xA)^{-1} = x^{-1} (x^{-1}I - A)^{-1}$.
    Hence, we derive the desired formula by the properties of inverse and cofactor matrices.
\end{proof}

The undirected graph version of \Cref{lem:digraph-closedwalk} appeared in~\cite{GM1981}.


\subsection{Matching polynomials of hypergraphs and conflict-free walks}\label{subsec:conflict}

We review several properties of the matching polynomials of $k$-graphs.
The matching polynomial can be alternatively defined by the following recursive formula, which is obvious from the definition.

\begin{lemma}[see~{\cite[Theorem~7(c)]{LLHE2018}}]
\label{lem: recursive formula}
    Let $H$ be a $k$-graph and $v\in V(H)$.
    Then
    \[
        m_k(H,x) = x\cdot m_k(H-v,x) - \sum_{e} m_k(H-V(e),x),
    \]
    where the summation of the right-hand side is over all edges $e$ incident with $v$.
\end{lemma}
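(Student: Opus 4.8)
The plan is to prove the equivalent statement at the level of the matching counts $p(H,i)$ and then read off the polynomial identity coefficientwise. Write $n \defeq |V(H)|$, so that $m_k(H,x) = \sum_{i\ge 0}(-1)^i p(H,i)\, x^{n-ki}$, where the sum may be extended harmlessly to all $i\ge 0$ since $p(H,i)=0$ once $i>\lfloor n/k\rfloor$. The key combinatorial claim I would establish is the recurrence
\[
    p(H,i) \;=\; p(H-v,i) \;+\; \sum_{e\ni v} p\bigl(H-V(e),\,i-1\bigr) \qquad (i\ge 1),
\]
together with $p(H,0)=1$, where the sum ranges over all edges $e$ incident with $v$, counted with multiplicity.

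To prove this recurrence I would split the $i$-edge matchings of $H$ into those that leave $v$ uncovered --- which are exactly the $i$-edge matchings of $H-v$ --- and those that cover $v$. In the latter case there is a unique edge $e$ of the matching through $v$, and deleting it leaves an $(i-1)$-edge matching of $H-V(e)$; conversely, any pair $(e,M')$ with $e\ni v$ and $M'$ an $(i-1)$-edge matching of $H-V(e)$ reassembles into such a matching. This bijection yields the summation term. Since $H$ may have multi-edges, the only point to watch is that parallel edges through $v$ (and the matchings using them) are genuinely distinct and must be tallied separately, which is precisely what the phrase ``sum over all edges $e$ incident with $v$'' records.

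Finally I would convert the recurrence into the stated identity by comparing coefficients of $x^{n-ki}$. Because $H-v$ has $n-1$ vertices, $x\cdot m_k(H-v,x)=\sum_i(-1)^i p(H-v,i)\,x^{n-ki}$; and because $H-V(e)$ has $n-k$ vertices, the reindexing $j=i-1$ gives $m_k(H-V(e),x)=\sum_i(-1)^{i-1} p(H-V(e),i-1)\,x^{n-ki}$. Summing the latter over $e\ni v$ and subtracting from the former reproduces $\sum_i(-1)^i p(H,i)\,x^{n-ki}=m_k(H,x)$ by the recurrence. I do not expect any substantial obstacle here: this is the routine deletion argument familiar from the graph case (cf.\ \cite{LLHE2018}), and the only care needed lies in the sign/exponent bookkeeping of the last step and in handling multi-edges consistently in the middle step.
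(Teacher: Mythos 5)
Your proof is correct and is exactly the routine deletion argument the paper has in mind when it calls the lemma ``obvious from the definition'' (citing \cite{LLHE2018}); the paper gives no further proof, and your coefficient and sign bookkeeping, as well as the multi-edge remark, are all accurate.
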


The second author~\cite{Lee-conflict-free} established the hypergraph version of Godsil's identity~{\cite[Theorem~2.5]{Godsil-graph}}.
It is a key tool for verifying \Cref{thm:main} and yields several properties of hypergraphs.

\begin{definition}[{\cite[Definition 3.2]{Lee-conflict-free}}]\label{def:conflict-free}
    Let $H$ be a $k$-graph and $\prec$ be a (strict) linear ordering of $V(H)$. 
    Let $W = (v_0, e_1, v_1, e_2, v_2, \dots, v_{\ell - 1}, e_{\ell}, v_{\ell})$ be a Berge-path, and we denote by
    \[
        V(e_i) = \{v_{i-1}, u_{(i, 1)}, \dots, u_{(i, k-2)}, v_i\}
        \text{ and }
        C_i = \{v_{i-1}\} \cup \{u_{(i, j)}: u_{(i, j)} \prec v_i \text{ with } j\in [k-2]\}.
    \]
    The Berge-path $W$ is said to be a \emph{conflict-free walk} starting at $v_0$ and terminating at $v_\ell$ if for each $2\leq i\leq \ell$, $V(e_i)$ is disjoint with the set $\bigcup_{j=1}^{i-1} C_j$.
\end{definition}

Note that every Berge-path of the form 
 $(v_0)$ or $(v_0, e_1, v_1)$ is a conflict-free walk.

\begin{definition}[{\cite[Definition 3.3]{Lee-conflict-free}}]\label{def:walk-tree}
    Let $H$ be a $k$-graph and $v\in V(H)$.
    We define a \emph{$k$-walk-tree} $T(H, v)$ of $H$ rooted at $v$ as follows. 
    \begin{enumerate}
        \item The vertex set of $T(H, v)$ is the set of conflict-free walks that start at $v$.
        \item For the edge set, we join conflict-free walks $W_0, W_1, \dots, W_{k-1}$ as a hyperedge of $T(H,v)$ if $H$ has a hyperedge $e$, denoted $V(e) = \{u_0, \dots, u_{k-1}\}$, such that 
        \begin{enumerate}
            \item $W_0$ is a conflict-free walk that starts at $v$ and ends at $u_0$, and
            \item for each $i\in [k-1]$, the conflict-free walk $W_i$ is formed from $W_0$ by concatenating $(e,u_i)$.
        \end{enumerate}
    \end{enumerate}
\end{definition}

From definition, a $k$-walk-tree $T(H,v)$ is a $k$-tree, and it is isomorphic to $H$ whenever $H$ is a $k$-tree.

\begin{theorem}[{Hypergraph Godsil's identity \cite[Theorem 3.5]{Lee-conflict-free}}]\label{thm: Lee identity}
    Let $H$ be a $k$-graph and $v \in V(H)$.
    Then the following holds.
    \[
        \frac{m_k(H - v, x)}{m_k(H, x)} = \frac{m_k(T(H, v) - V, x)}{m_k(T(H, v), x)},
    \]
    where $V$ is the one-vertex conflict-free walk $(v)$. In addition, $m_k(H,x)$ divides $m_k(T(H,v),x)$ if $H$ is connected.
    \footnote{In the original paper~\cite{Lee-conflict-free}, the result was proved for $k$-graphs without multi-edges, but it readily extends to allow multi-edges. Also, while the final statement ``$m_k(H,x)$ divides $m_k(T(H,v),x)$'' is not stated explicitly in~\cite{Lee-conflict-free}, but it follows immediately from the proof.}
\end{theorem}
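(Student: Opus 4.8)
The plan is to follow the strategy of Godsil's proof for graphs~\cite{Godsil-graph}, transported to the conflict-free setting, arguing by induction on $n \defeq |V(H)|$. The base case $n=1$ is immediate: then $T(H,v)$ is the single vertex $(v)$, and both sides equal $x^{-1}$. For the inductive step I will apply \Cref{lem: recursive formula} to $v$ in $H$ and, simultaneously, to the root $(v)$ in $T(H,v)$. The first observation is that the hyperedges of $T(H,v)$ incident with $(v)$ are exactly the sets
\[
	f_e \defeq \{(v)\} \cup \{(v,e,w) : w \in V(e)\setminus\{v\}\},
\]
one for each edge $e$ of $H$ with $v \in V(e)$; this is forced by \Cref{def:walk-tree}, since a Berge-path cannot revisit $v$, so the only conflict-free walk that can play the role of~$W_0$ at the root is $(v)$ itself. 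As $m_k(G,x)$ is monic of degree $|V(G)|$, all the polynomials that appear are nonzero; dividing the two instances of \Cref{lem: recursive formula} by $m_k(H-v,x)$ and by $m_k(T(H,v)-V,x)$ and comparing them termwise (using the description of the root hyperedges above), the identity reduces to proving
\[
	\frac{m_k(H-V(e),x)}{m_k(H-v,x)} \;=\; \frac{m_k(T(H,v)-V(f_e),x)}{m_k(T(H,v)-V,x)}
\]
for every edge $e\ni v$.

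The crux is a structural lemma describing the branches of $T(H,v)$ below the root --- the hypergraph analogue of the fact that, in a graph path-tree, the subtree hanging below $\langle v,w\rangle$ is the path-tree of $G-v$ rooted at $w$. Fix an edge $e\ni v$ and \emph{list $V(e)\setminus\{v\}=\{w_1\prec w_2\prec\cdots\prec w_{k-1}\}$ in increasing $\prec$-order}; set $C_i \defeq \{v,w_1,\dots,w_{i-1}\}$, so that $C_1=\{v\}$. I claim that for each $i\in[k-1]$ the maximal subtree $T_i$ of $T(H,v)$ rooted at $(v,e,w_i)$ is isomorphic to the walk-tree $T(H-C_i,w_i)$, via the map that deletes the prefix $(v,e)$ from a conflict-free walk. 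The $\prec$-increasing listing is exactly the right one because, for a walk $(v,e,w_i,e_2,v_2,\dots)$, the set $C_1$ attached to its first edge in \Cref{def:conflict-free} is $\{v\}\cup\{w_j : w_j\prec w_i\}=C_i$; the only part of the conflict-free condition that refers to this first edge is the requirement $V(e_m)\cap C_i=\emptyset$ for all $m\ge 2$, and after deleting the prefix this is precisely the statement that $e_2,\dots,e_\ell$ are edges of the induced sub-$k$-graph $H-C_i$ (note $e$ itself is not an edge of $H-C_i$, since $v\in e\cap C_i$, so no Berge-path conflict can arise), while the remaining conditions transfer verbatim because deleting vertices does not change the $\prec$-order of the survivors. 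One checks in the same way that prefix-deletion is a bijection respecting walk-tree incidences, which yields $T_i\cong T(H-C_i,w_i)$ and, consequently, that $T(H,v)-V$ is the disjoint union $\bigsqcup_{e'\ni v}\bigsqcup_{i=1}^{k-1} T_i^{e'}$ of all such branches.

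Granting this, the right-hand side of the displayed identity is immediate. Since $m_k$ is multiplicative over disjoint unions, and removing $V(f_e)$ from $T(H,v)$ amounts to deleting from the forest $T(H,v)-V$ exactly the roots of $T_1^e,\dots,T_{k-1}^e$,
\[
	\frac{m_k(T(H,v)-V(f_e),x)}{m_k(T(H,v)-V,x)} \;=\; \prod_{i=1}^{k-1}\frac{m_k\big(T(H-C_i,w_i)-(w_i),\,x\big)}{m_k\big(T(H-C_i,w_i),\,x\big)},
\]
and the induction hypothesis for $(H-C_i,w_i)$ --- valid since $|V(H-C_i)|=n-i<n$ for every $i\ge 1$ --- rewrites the $i$-th factor as $m_k((H-C_i)-w_i,x)/m_k(H-C_i,x)$. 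On the left-hand side, using $H-C_i=H-\{v,w_1,\dots,w_{i-1}\}$ and $H-V(e)=H-\{v,w_1,\dots,w_{k-1}\}$, the product $\prod_{i=1}^{k-1} m_k((H-C_i)-w_i,x)/m_k(H-C_i,x)$ telescopes to $m_k(H-V(e),x)/m_k(H-v,x)$. This proves the displayed identity for each $e\ni v$, and substituting back into the two instances of \Cref{lem: recursive formula} gives the asserted identity. For the divisibility, rearranging the identity yields $m_k(T(H,v),x)=\big(m_k(T(H,v)-V,x)/m_k(H-v,x)\big)\cdot m_k(H,x)$, so it suffices that $m_k(H-v,x)$ divides $m_k(T(H,v)-V,x)=\prod_{e\ni v}\prod_i m_k(T(H-C_i^e,w_i^e),x)$. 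For each connected component $Q$ of $H-v$, use connectivity of $H$ to choose an edge $e\ni v$ with $V(e)\cap Q\ne\emptyset$, and let $w$ be the $\prec$-least vertex of $V(e)\cap Q$; then $C_i^e$ (for the index $i$ with $w_i^e=w$) contains no vertex of $Q$, so $Q$ is exactly the component of $w$ in $H-C_i^e$, whence the induction hypothesis gives $m_k(Q,x)\mid m_k(T(H-C_i^e,w),x)$. Distinct components of $H-v$ select distinct factors of the product, so multiplying over all components gives $m_k(H-v,x)\mid m_k(T(H,v)-V,x)$.

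I expect the main obstacle to be the structural lemma of the second paragraph: getting the bookkeeping of \Cref{def:conflict-free} exactly right, so that the $\prec$-increasing listing of $V(e)\setminus\{v\}$ is recognized as the correct one and conflict-freeness of a walk through $(v,e,w_i)$ is seen to decouple cleanly into ``the later edges avoid $C_i$'' (absorbed by passing from $H$ to $H-C_i$) together with ``the tail is conflict-free in $H-C_i$''. Once that identification of branches is in place, the rest is a routine manipulation of the recursion and of multiplicativity of the matching polynomial over disjoint unions.
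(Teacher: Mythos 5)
This theorem is imported from the reference \cite{Lee-conflict-free} and the present paper gives no proof of it, so there is no in-paper argument to compare against; judged on its own, your proof is correct and follows the expected Godsil-style route. The key structural step --- that the branch of $T(H,v)$ hanging below $(v,e,w_i)$ is $T(H-C_i,w_i)$ for the $\prec$-increasing listing of $V(e)\setminus\{v\}$, so that the product over $i$ telescopes to $m_k(H-V(e),x)/m_k(H-v,x)$ --- is verified carefully and is exactly what makes Definition~\ref{def:conflict-free} work; the extension to multi-edges is handled automatically since walks record edges rather than vertex sets. Your derivation of the divisibility statement (matching each component $Q$ of $H-v$ to a distinct branch whose walk-tree equals $T(Q,w)$, then invoking the inductive divisibility for the connected graph $Q$) is also sound and substantiates the footnote's claim that divisibility ``follows immediately from the proof.''
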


\begin{example}
    Let $H = K_4^{(3)}$ be the complete $3$-graph on four vertices $1,2,3,4$ with the usual linear ordering.
    We denote by $e_i$ the edge missing a vertex $i$.
    Then $T(H, 1)$ is a linear $3$-graph illustrated in Figure~\ref{fig: 3-walk-tree of K4(3)}.
    One can easily compute that
    \[
        \frac{m_k(H - 1, x)}{m_k(H, x)} 
        =
        \frac{x^3-1}{x^4-4x}
        =
        \frac{x^{12}-3x^9+3x^6-x^3}{x^{13}-6x^{10}+9x^7-4x^4}
        =
        \frac{m_k(T(H, 1) - (1), x)}{m_k(T(H, 1), x)}.
    \]
\end{example}

\begin{figure}
    \centering
    \begin{tikzpicture}
        \tikzstyle{v}=[circle, draw, solid, fill=black, inner sep=0pt, minimum width=6pt]
        \tikzstyle{slant}=[draw=red,line width=1pt,preaction={clip, postaction={pattern=north west lines, pattern color=red}}]

        \coordinate (1) at (0,0);
        \coordinate (132) at (-5.8,-1.5);
        \coordinate (123) at (-3.8,-1.5);
        \coordinate (142) at (-1,-1.5);
        \coordinate (124) at (1,-1.5);
        \coordinate (143) at (3.8,-1.5);
        \coordinate (134) at (5.8,-1.5);

        \coordinate (13243) at (-6.5,-3);
        \coordinate (13234) at (-5.1,-3);

        \coordinate (14243) at (-1.7,-3);
        \coordinate (14234) at (-0.3,-3);

        \coordinate (14342) at (3.1,-3);
        \coordinate (14324) at (4.5,-3);

        \draw[slant] (1)--(132)--(123)--cycle;
        \draw[slant] (1)--(142)--(124)--cycle;
        \draw[slant] (1)--(143)--(134)--cycle;

        \draw[slant] (132)--(13243)--(13234)--cycle;
        \draw[slant] (142)--(14243)--(14234)--cycle;
        \draw[slant] (143)--(14342)--(14324)--cycle;
        
        \node[v] at (1) {};
        \node[v] at (132) {};
        \node[v] at (123) {};
        \node[v] at (142) {};
        \node[v] at (124) {};
        \node[v] at (143) {};
        \node[v] at (134) {};
        \node[v] at (13243) {};
        \node[v] at (13234) {};
        \node[v] at (14243) {};
        \node[v] at (14234) {};
        \node[v] at (14342) {};
        \node[v] at (14324) {};

        \node[above] at (1) {$(1)$};
        
        \node[below] at (132) {$(1,e_4,2)$};
        \node[below] at (123) {$(1,e_4,3)$};
        \node[below] at (142) {$(1,e_3,2)$};
        \node[below] at (124) {$(1,e_3,4)$};
        \node[below] at (143) {$(1,e_2,3)$};
        \node[below] at (134) {$(1,e_2,4)$};

        \node[below, xshift=-0.5cm] at (13243) {$(1,e_4,2,e_1,3)$};
        \node[below, xshift=+0.5cm] at (13234) {$(1,e_4,2,e_1,4)$};

        \node[below, xshift=-0.5cm] at (14243) {$(1,e_3,2,e_1,3)$};
        \node[below, xshift=+0.5cm] at (14234) {$(1,e_3,2,e_1,4)$};

        \node[below, xshift=-0.5cm] at (14342) {$(1,e_2,3,e_1,2)$};
        \node[below, xshift=+0.5cm] at (14324) {$(1,e_2,3,e_1,4)$};
    \end{tikzpicture}
    \caption{$T(K_4^{(3)},1)$.}
    \label{fig: 3-walk-tree of K4(3)}
\end{figure}
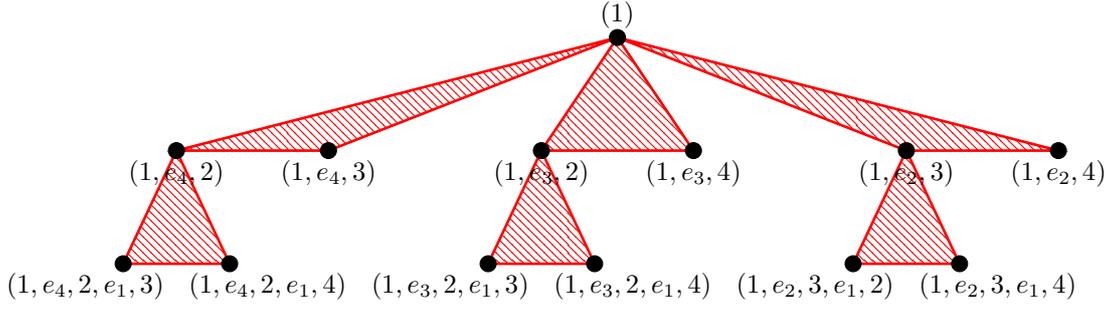


\section{Interplay between matching and characteristic polynomials}\label{sec:treelike-walk}

We establish a connection between the matching polynomials of hypergraphs and the characteristic polynomials of digraphs.

\begin{definition}\label{def: digraph}
    Let $L$ be a linear hypergraph, in which each hyperedge is assigned a linear ordering of its vertices. We define a digraph $D(L)$ as follows:
    \begin{itemize}
        \item $V(D(L)) = V(L)$.
        \item The set of arcs of $D(L)$ is
        \[
            \bigcup
            \{(w_1,w_2), \ldots, (w_{k-1},w_k), (w_k,w_1)\},
        \]
        where the union is taken over all hyperedges $e$ of $L$ with vertices $w_1,\dots,w_k$ listed in order.
    \end{itemize}
\end{definition}

See Figure~\ref{fig: digraph} for example.
If $L$ is a graph, then $D(L)$ is a digraph obtained by replacing each edge of $L$ with a digon.
Thus, the following lemma generalizes the classical result in graph theory: for a tree, its matching polynomial and characteristic polynomial coincide~{\cite[Corollary~5.1]{GG1981}}.

\begin{figure}[h!]
    \centering
    \begin{tikzpicture}
        \tikzstyle{v}=[circle, draw, solid, fill=black, inner sep=0pt, minimum width=4.5pt]
        \tikzstyle{slant}=[draw=red,line width=1pt,preaction={clip, postaction={pattern=north west lines, pattern color=red}}]
        \tikzset{
            mid arrow/.style={
                decoration={
                    markings,
                    mark= at position 0.5 with {\arrow{>}}
                },
                postaction={decorate}
            }
        }

        \begin{scope}
            \coordinate (1) at (0,0);
            \coordinate (2) at (-1,1);
            \coordinate (3) at (-2,0);
            \coordinate (4) at (-1,-1);

            \coordinate (5) at (1,-1);
            \coordinate (6) at (2,0);
            \coordinate (7) at (1,1);

            \draw[slant]
                (1)--(2)--(3)--(4)--cycle
                (1)--(5)--(6)--(7)--cycle;

            \node[v] at (1) {};
            \node[v] at (2) {};
            \node[v] at (3) {};
            \node[v] at (4) {};
            \node[v] at (5) {};
            \node[v] at (6) {};
            \node[v] at (7) {};

            \node[below] at (1) {$1$};
            \node[above] at (2) {$2$};
            \node[left] at (3) {$3$};
            \node[below] at (4) {$4$};
            \node[below] at (5) {$5$};
            \node[right] at (6) {$6$};
            \node[above] at (7) {$7$};
            
            \node at (-0.1,-1.7) {$L$};
        \end{scope}

        \begin{scope}[xshift=6.0cm]
            \coordinate (1) at (0,0);
            \coordinate (2) at (-1,1);
            \coordinate (3) at (-2,0);
            \coordinate (4) at (-1,-1);

            \coordinate (5) at (1,-1);
            \coordinate (6) at (2,0);
            \coordinate (7) at (1,1);

            \draw[mid arrow] (1)--(2);
            \draw[mid arrow] (2)--(3);
            \draw[mid arrow] (3)--(4);
            \draw[mid arrow] (4)--(1);
                
            \draw[mid arrow] (1)--(5);
            \draw[mid arrow] (5)--(6);
            \draw[mid arrow] (6)--(7);
            \draw[mid arrow] (7)--(1);

            \node[v] at (1) {};
            \node[v] at (2) {};
            \node[v] at (3) {};
            \node[v] at (4) {};
            \node[v] at (5) {};
            \node[v] at (6) {};
            \node[v] at (7) {};

            \node[below] at (1) {$1$};
            \node[above] at (2) {$2$};
            \node[left] at (3) {$3$};
            \node[below] at (4) {$4$};
            \node[below] at (5) {$5$};
            \node[right] at (6) {$6$};
            \node[above] at (7) {$7$};
            
            \node at (-0.1,-1.7) {$D(L)$};
        \end{scope}
    \end{tikzpicture}
    \caption{A linear hypergraph $L$ and the corresponding digraph $D(L)$.}
    \label{fig: digraph}
\end{figure}
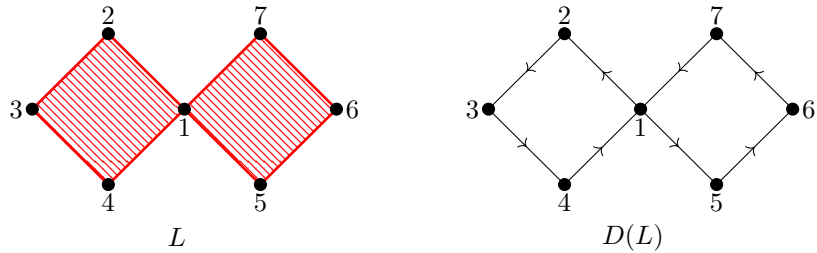

\begin{lemma}\label{lem:matching-characteristic}
    Let $T$ be a $k$-tree endowed with a linear ordering of the vertices of each hyperedge, and let $D = D(T)$.
    Then
    \[
        m_k(T,x) = \chi(D,x).
    \]
\end{lemma}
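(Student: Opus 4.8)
The plan is to prove $m_k(T,x) = \chi(D,x)$ by induction on the number of hyperedges of $T$, peeling off a "leaf hyperedge" of the $k$-tree at each step. First I would dispose of the base cases: if $T$ has no hyperedges, then $m_k(T,x) = x^{|V(T)|} = \chi(D,x)$ since $D$ has no arcs. For the inductive step, since $T$ is a $k$-tree (hence linear and Berge-cycle-free), I would argue that $T$ contains a hyperedge $e$ with at least $k-1$ vertices that each belong only to $e$; call such a hyperedge a \emph{pendant edge}. Concretely, taking a Berge-path of maximum length in $T$, the edge at its end must have this property, as otherwise one could extend the path or find a Berge-cycle.

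Let $V(e) = \{w_1, \dots, w_k\}$ listed in the assigned order, and let $v$ be the unique vertex of $e$ that may be shared with the rest of $T$ (so $w_1, \dots, w_k$ minus $v$ are all pendant). I would then apply the recursive formula for the matching polynomial (\Cref{lem: recursive formula}) at a well-chosen pendant vertex $u \in V(e) \setminus \{v\}$: since $e$ is the only edge incident with $u$, we get
\[
    m_k(T,x) = x\cdot m_k(T-u,x) - m_k(T - V(e),x).
\]
Here $T - u$ is still a $k$-tree (with one fewer hyperedge, as $e$ disappears but the isolated vertices $V(e)\setminus\{u,v\}$ remain), and $T - V(e)$ is also a $k$-tree with one fewer hyperedge. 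By induction, $m_k(T-u,x) = \chi(D(T-u),x)$ and $m_k(T-V(e),x) = \chi(D(T-V(e)),x)$.

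On the digraph side, I would show the matching formula is mirrored by characteristic-polynomial reductions on $D = D(T)$. The key point is that in $D$, the pendant vertices of $e$ form a directed path segment inside the single directed $k$-cycle coming from $e$; I would use \Cref{lem: digraph reduction} to delete arcs at a suitable source/sink. Specifically, pick the pendant vertex $u$ so that in the cyclic order $w_1 \to w_2 \to \cdots \to w_k \to w_1$ the vertex $u$ has a pendant out-neighbor or in-neighbor, making it (after possibly first trimming) a source or sink in $D$; deleting its incident arcs gives $\chi(D,x) = \chi(D - \text{arcs at }u, x)$. Iterating, one reduces the entire $k$-cycle of $e$, and what remains is exactly $D(T - u)$ together with the extra isolated vertices already accounted for, or one isolates $V(e)$ entirely. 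Matching the bookkeeping — that the digraph reductions produce precisely $x\cdot \chi(D(T-u),x) - \chi(D(T-V(e)),x)$, reproducing the cofactor expansion of $\det(xI - A(D))$ along the row/column of a pendant vertex — is the main obstacle. I expect this to require carefully choosing which pendant vertex of $e$ to eliminate (one with a pendant neighbor in the cycle so that the source/sink condition of \Cref{lem: digraph reduction} is genuinely met) and then a direct Laplace expansion argument: expanding $\det(xI - A)$ along the column of $u$ yields $x\det(xI - A(D-u)) - (\text{term from the arc into }u)(\text{term from the arc out of }u)\cdot\det(\cdots)$, and one checks the off-diagonal contribution collapses to $\chi(D(T-V(e)),x)$ because the $k$-cycle of $e$ is vertex-disjoint from the rest of $D$ except at $v$, and $v$'s only role in the relevant minor is as an isolated vertex. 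Once this identity is verified, combining it with the matching-polynomial recursion and the inductive hypothesis closes the induction.
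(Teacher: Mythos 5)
Your proposal is correct and follows essentially the same route as the paper: induction on $|E(T)|$, peeling off a pendant hyperedge $e$ of the $k$-tree, applying the matching recursion (\Cref{lem: recursive formula}) at a degree-one vertex of $e$, and matching it against a cofactor expansion of $\det(xI-A(D))$ whose two minors are identified with $\chi(D(T-v),x)$ and $\chi(D(T-V(e)),x)$ by iterated use of \Cref{lem: digraph reduction} and the induction hypothesis. The bookkeeping you flag as the main obstacle is handled in the paper by expanding along the row of the pendant vertex $u_k$ whose outgoing arc points to the unique non-pendant vertex $w=u_1$, so that the row has only the entries $x$ and $-1$ and the off-diagonal cofactor collapses to $-\chi(D-V(e),x)$ exactly as you anticipate.
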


\begin{proof}[Proof of \Cref{lem:matching-characteristic}]
    We proceed by induction on $|E(T)|$.
    The given equality holds if $T$ is edgeless, so we may assume that $T$ has a hyperedge.
    As $T$ is a $k$-tree, there is a hyperedge $e$ in which all but one vertex, say $w$, have degree one.
    We enumerate $V(e)$ as $\{u_1,\ldots,u_k\}$ so that $u_1 = w$ and $(u_1,u_2), \ldots, (u_{k-1},u_k), (u_k,u_1)$ are arcs of $D$.
    By permuting rows and columns, we assume that $u_i$ indicates the $(k-i+1)$-th last row and column of $A(D)$. Then
    \[
    xI-A(D) = 
    \begin{blockarray}{ccccccc}
        && u_1 & u_2 & u_3 & \cdots & u_k \\
        \begin{block}{c(c|c|cccc)}
            & \star & \star & 
            \BAmulticolumn{4}{c}{\mathbf{0}} 
            \\
            \cline{2-7}
            u_1 & \star & x & -1 & 0 & \cdots & 0 
            \\
            \cline{2-7}
            u_2 & 
            \multirow{4}{*}{$\mathbf{0}$}
            & 0 &  x & -1 & \cdots & 0
            \\
            u_3 &   & 0 &  0 &  x & \cdots & 0
            \\
            \vdots & & \vdots & \vdots & \vdots & \ddots & \vdots 
            \\
            u_k &   &-1 &  0 & 0 & \cdots & x
            \\
        \end{block}
    \end{blockarray}
    \, .
    \]

    Let $v := u_k$ and $a_i:=(u_i,u_{i+1})$ with $1\le i\le k-1$. 
    Observe that $D(T-v)$ is equal to $(D-v) \setminus a_{k-2} \setminus \cdots \setminus a_1$.
    By \Cref{lem: digraph reduction} and the induction hypothesis, the $(v,v)$-minor of $xI-A(D)$ is 
    \[ 
        \chi(D-v,x) =  \chi((D-v) \setminus a_{k-2},x) = \cdots = \chi((D-v) \setminus a_{k-2} \setminus \cdots \setminus a_1) = m_k(T-v,x)
    \]
    It is straightforward to see that the $(v,w)$-minor of $xI-A(D)$
    is equal to $(-1)^{k-1} \cdot \chi(D-V(e),x)$.
    Then, as before, $\chi(D-V(e),x) = m_k(T-V(e),x)$ follows from \Cref{lem: digraph reduction} and the induction hypothesis.
    Therefore, 
    \[
        \chi(D,x) = x \cdot \chi(D-v,x) - \chi(D-V(e),x) = m_k(T-v,x) - m_k(T-V(e),x) = m_k(T,x)
    \]
    where the first equality holds by the cofactor expansion of $xI-A(D)$ along the $v$-th row and the last equality holds by \Cref{lem: recursive formula}.
\end{proof}

We henceforth take a $k$-tree $T$ as $T(H,v)$ for a given $k$-graph $H$ and a vertex $v\in V(H)$.
Recall that each vertex $W$ of $T$ is identified with a conflict-free walk $(v_0,e_1,v_1,\ldots,e_\ell,v_\ell)$ that starts at $v_0=v$.
We define a map $\pi: V(T) \to V(H)$ as $\pi(W)$ being the terminal vertex $v_\ell$ in the walk.
Then, the vertices of each hyperedge of $T$ inherit the linear ordering of $V(H)$, and thus one can define a digraph $D(T(H,v))$ by \Cref{def: digraph}.
For simplicity, we write it as $D(H,v)$.
Note that every closed walk in $D(H,v)$ has length divisible by $k$.

Combining \Cref{thm: Lee identity} and \Cref{lem:matching-characteristic}, we have
\[
    \frac{1}{x} \cdot
    \frac{m_k(H-v,x^{-1})}{m_k(H,x^{-1})} 
    =
    \frac{1}{x} \cdot
    \frac{\chi(D(H,v)-(v),x^{-1})}{\chi(D(H,v),x^{-1})},
\]
which is the generating function of closed walks in $D(H,v)$ starting at the vertex $(v)$ by \Cref{lem:digraph-closedwalk}.
It is a key identity to prove the main result (\Cref{thm:construction}).
So, we observe some properties of closed walks in $D(H,v)$ before going to the final stage.


\begin{lemma}\label{lem: closed walk}
    Let $C$ be a closed walk in $D(H,v)$ which starts at $(v)$, and let $X := \pi(V(C))$.
    \begin{enumerate}
        \item $C$ is a closed walk in $D(H[X],v)$.
        \item $|X| \le \left\lfloor \frac{k-1}{k} |E(C)| \right\rfloor + 1$.
    \end{enumerate}
\end{lemma}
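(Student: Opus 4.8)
The plan is to analyze the structure of a closed walk $C$ in $D(H,v)$ by projecting it down to $H$ via $\pi$ and keeping track of how conflict-free walks are built up along $C$. For part (1), the key observation is that every arc of $D(H,v)$ comes from a hyperedge of $T(H,v)$, and every hyperedge of $T(H,v)$ arises from a hyperedge $e$ of $H$ together with a conflict-free walk $W_0$ ending at some vertex of $e$; crucially, $\pi$ maps the vertices of such a hyperedge of $T(H,v)$ bijectively onto $V(e)$. So if $C$ uses an arc coming from a $T(H,v)$-hyperedge over $e\in E(H)$, then $V(e)\subseteq \pi(V(C)) = X$, hence $e\in E(H[X])$. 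Therefore every arc of $C$ already lives in the digraph $D(H[X],v)$ — one just needs to check that the conflict-free-walk structure used to define $T(H[X],v)$ agrees with that of $T(H,v)$ when restricted to walks with vertex set in $X$, which is immediate since the linear ordering $\prec$ and the notion of conflict-freeness are inherited by the induced sub-hypergraph. This gives (1).

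For part (2), I would bound $|X|$ by counting how many genuinely new vertices of $H$ can be "discovered" as one traverses the closed walk $C$. Fix the closed walk $C$ and think of it as a sequence of arcs; group the arcs into consecutive blocks of $k$ arcs, since every closed walk in $D(H,v)$ has length divisible by $k$, so $C$ consists of exactly $|E(C)|/k$ such blocks — wait, more carefully: $C$ has $|E(C)|$ edges in the sense of the hypergraph $D$, so I should be careful about what $|E(C)|$ means here. Reading the statement, $E(C)$ should be the multiset of hyperedges (of $T(H,v)$) traversed, equivalently the arc-length of $C$ divided by $k$. The right way to argue: walk along $C$ starting from $(v)$; each time $C$ traverses a $T(H,v)$-hyperedge, it passes through $k$ vertices of $T(H,v)$, which project to the $k$ vertices of the corresponding hyperedge $e$ of $H$. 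Initially (at the root $(v)$) we have discovered $1$ vertex of $H$, namely $v$. Each subsequent hyperedge traversal can contribute at most $k-1$ new vertices to $X$, because the hyperedge $e$ of $H$ in question contains the projection of the "branch vertex" $W_0$ of that $T(H,v)$-hyperedge, and $W_0$ lies on a conflict-free walk already visited, so $\pi(W_0)\in X$ is old. Hence after the $j$-th distinct hyperedge traversal we have $|X|\le 1 + (k-1)j$.

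The main obstacle — and the place requiring genuine care rather than bookkeeping — is relating the number $j$ of "distinct hyperedge traversals that can each contribute $k-1$ new vertices" to $|E(C)|$, and then extracting the floor. One has to be careful that a single hyperedge of $H$ can correspond to many hyperedges of $T(H,v)$ (over different conflict-free walks $W_0$), and that $|E(C)|$ counts hyperedges of $T(H,v)$ with multiplicity along the walk; the clean bound is that at most $\lfloor |E(C)| / 1\rfloor$... no: the sharp statement is that among the $|E(C)|$ traversed hyperedges, the total number of new $H$-vertices contributed is at most $(k-1)$ times the number of traversals that actually produce something new, and since the walk is closed and returns to $(v)$, a standard spanning-tree / DFS-type argument shows the number of "productive" traversals is at most $\lceil |E(C)| \cdot \frac{k-1}{k}\rceil$-ish — to get exactly $|X|\le \lfloor \frac{k-1}{k}|E(C)|\rfloor + 1$ I would instead argue directly: the sub-hypergraph of $T(H,v)$ consisting of the traversed hyperedges is a connected sub-$k$-tree $T'$ rooted at $(v)$ with at most $|E(C)|$ edges, hence $|V(T')| \le (k-1)|E(T')| + 1 \le (k-1)|E(C)| + 1$; but that's too weak. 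The correct refinement uses that $C$ is a \emph{closed} walk, so every edge of $T'$ is traversed an even number of times... actually in a $k$-tree every closed walk traverses each edge a positive even number of times, giving $|E(T')| \le |E(C)|/2$ when $k=2$; for general $k$ the cyclic orientation forces each hyperedge of $T'$ to be traversed a multiple of ... I will pin this down by the identity $|E(C)| = \sum_{e \in E(T')} (\text{number of arc-traversals of the }k\text{-cycle of }e)$ combined with $|V(T')| = (k-1)|E(T')| + 1$ and the parity/divisibility constraint that each such cycle is traversed enough times, which yields $|V(T')| \le \lfloor \frac{k-1}{k} |E(C)| \rfloor + 1$ and hence the bound on $|X| \le |V(T')|$.
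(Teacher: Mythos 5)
Part (1) of your argument is fine and is at the same level of detail as the paper, which dismisses (1) as immediate from the definitions. The problem is part (2), where you explicitly flag ``the main obstacle'' and then never close it: the proposal ends by asserting that a ``parity/divisibility constraint'' yields $|V(T')|\le\lfloor\frac{k-1}{k}|E(C)|\rfloor+1$ without deriving it. Two concrete issues. First, you misread $|E(C)|$: it is the number of \emph{arcs} of the closed walk $C$ in the digraph $D(H,v)$ (its length), not the number of hyperedge-traversals; this is forced by how the lemma is used in the main proof (closed walks of length $\ell\le n$ visit at most $\lfloor\frac{k-1}{k}\ell\rfloor+1$ vertices) and by the paper's own step $|E(C')|=|E(C)|-k$. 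Under your alternative reading the stated bound is simply false (for $k=2$ and $H=K_2$, the walk $v\to w\to v$ traverses one hyperedge but visits two vertices). This misreading is also what led you to discard the correct direct argument as ``too weak'': with $|E(C)|$ the arc-length, the chain you wrote becomes $|X|\le|V(T')|=(k-1)|E(T')|+1\le(k-1)\cdot\frac{|E(C)|}{k}+1$, and integrality of $|X|$ gives the floor. Second, the inequality $|E(T')|\le|E(C)|/k$ is exactly the structural fact you gesture at but do not prove: every hyperedge of $T(H,v)$ whose $k$-cycle is touched by $C$ has \emph{all $k$ of its arcs} traversed at least once. This holds because for each non-parent vertex $W_i$ of such a hyperedge, the subtree of $T(H,v)$ below $W_i$ is joined to the rest of the digraph only by the single in-arc and single out-arc of that cycle at $W_i$, so a closed walk from the root that touches the cycle must use both, and chasing this around the cycle covers all $k$ arcs. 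Without this step your bound $|X|\le 1+(k-1)j$ cannot be converted into a bound in terms of $|E(C)|$.

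For comparison, the paper proves (2) by induction on the arc-length: the trajectory of $C$ traces a subtree $T'$ of $T(H,v)$; taking a hyperedge $e$ of $T'$ furthest from the root, its $k-1$ non-parent vertices are leaves of $T'$, so $C$ contains the full cycle as a contiguous block $(u_0,u_1,\dots,u_{k-1},u_0)$; deleting the first occurrence of $(u_1,\dots,u_{k-1})$ removes exactly $k$ arcs and at most $k-1$ elements of $X$, and the induction hypothesis finishes. Your ``direct'' count via $|V(T')|=(k-1)|E(T')|+1$ is a legitimate alternative and arguably cleaner, but it needs the full-cycle-traversal lemma for \emph{every} hyperedge of $T'$ (not just the furthest one), and that is precisely the piece your proposal leaves unproved.
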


\begin{proof}[Proof of Lemma~\ref{lem: closed walk}]
    The proof of (1) is straightforward from definition. Thus, we only prove~(2).

    For the proof of (2), we proceed by induction on $\ell:= |E(C)|$.
    The initial case $\ell = 0$ is trivial, so we assume that $\ell > 0$.
    The trajectory of $C$ corresponds to a subtree, say $T'$, of $T(H,v)$.
    Let $e$ be a hyperedge of $T'$ which is furthest from $v$.
    Denote $V(e) = \{u_0,\ldots,u_{k-1}\}$ such that $(u_0,u_1)$, $\ldots$, $(u_{k-2},u_{k-1})$, $(u_{k-1},u_0)$ are arcs of $D(H,v)$, and $C$ is written as a vertex sequence $(v, \ldots, u_0, u_1, \ldots, u_{k-1}, u_0, \ldots, v)$.
    Define $C'$ to be the subwalk of $C$ obtained by removing the first occurrence of the subsequence $(u_1,\ldots,u_{k-1})$. 
    Consequently, $|E(C')| = |E(C)|-k$ and $V(C) - (V(e) - \{u_0\}) \subseteq V(C')$.
    The latter containment implies $|\pi(V(C))| - (k-1) \le |\pi(V(C'))|$.
    Therefore, the desried result follows by the induction hypothesis.
\end{proof}



\section{Proof of \Cref{thm:main}}\label{sec:proof}

\begin{proof}[Proof of \Cref{thm:main}]
    Denote $n' := \left\lfloor \frac{k-1}{k} n \right\rfloor + 1$ and $D_v := D(H,v)$ for each $v \in V(H)$.

    We establish the result by interpreting the following function in two distinct ways.
    \[
        \frac{1}{x} \cdot \frac{m_k'(H,x^{-1})}{m_k(H,x^{-1})}
        \tag{$*$}\label{tag: main}
    \]
    where $m_k'(H,x) := \frac{d}{dx} m_k(H,x)$.
    Let $\lambda_1, \ldots, \lambda_n$ be the roots of $m_k(H,x)$.
    Then \eqref{tag: main} is equal to
    \[
         \sum_{i=1}^n \frac{1}{1-\lambda_i x}
         = \sum_{\ell=0}^\infty \left( \lambda_1^\ell + \cdots + \lambda_n^\ell \right) x^\ell.
    \]
    Let $d_H(v,\ell)$ be the number of closed walks in $D$ which start at $v$.
    Because $m_k'(H,x) = \sum_{v\in V(H)} m_k(H-v,x)$, the following holds.
    \begin{align*}
        \eqref{tag: main} &= \sum_{v\in V(H)} \frac{1}{x} \cdot \frac{m_k(H-v,x^{-1})}{m_k(H,x^{-1})} \\
        &=
        \sum_{v\in V(H)} \frac{1}{x} \cdot \frac{m_k(T(H,v)-(v),x^{-1})}{m_k(T(H,v),x^{-1})}
        \tag{by \Cref{thm: Lee identity}}
        \\
        &=
        \sum_{v\in V(H)} \frac{1}{x} \cdot \frac{m_k(D_v-(v),x^{-1})}{m_k(D_v,x^{-1})}
        \tag{by \Cref{lem:matching-characteristic}}
        \\
        &=
        \sum_{\ell = 0}^\infty \left( \sum_{v\in V(H)} d_H(v,\ell) \right) x^\ell.
        \tag{by \Cref{lem:digraph-closedwalk}}
    \end{align*}
    Hence,
    \[
        \lambda_1^\ell + \cdots + \lambda_n^\ell
        =
        \sum_{v\in V(H)} d_H(v,\ell)
    \]
    for all $\ell \ge 0$.
    Therefore, by the Newton identity, 
    the matching polynomial $m_k(H,x)$ is uniquely determined from the numbers $\sum_{v\in V(H)} d_H(v,\ell)$ with $1\le \ell \le n$.
    
    We hence show that $d_H(v,\ell)$ can be computed from the multiset $\mathcal{C}(H,n')$.
    Let $d_H(v,\ell,m)$ be the number of closed walks $C$ in $D_v$ such that $C$ starts at $v$ and $|\pi(C)| = m$.
    For $\ell \le n$,
    \[
        d_H(v,\ell) = \sum_{m=0}^{n'} d_H(v,\ell, m)
    \]
    by \Cref{lem: closed walk}.
    By a simple double-counting, we have
    \[
        \binom{n-m}{n'-m}  \cdot d_H(v,\ell, m)
        =
        \sum_{H'} d_{H'}(v,\ell, m)
    \]
    where the summation is over all induced sub-hypergraph $H'$ of $H$ such that $v\in V(H')$ and $|V(H')| = n'$.
     Therefore, from $\mathcal{C}(H,n')$, we derive the matching polynomial $m_k(H,x)$.
\end{proof}


\section{Sharpness of \Cref{thm:main}}\label{sec:example}

The goal of this section is to prove the following theorem.
\begin{theorem}\label{thm:construction}
    For every $n \ge k \geq 2$ with $n$ divisible by $k$, there are $n$-vertex $k$-graphs $H_1$ and $H_2$ such that $\calC(H_1, \lfloor \frac{k-1}{k}n \rfloor) = \calC(H_2, \lfloor \frac{k-1}{k}n \rfloor)$, but $m_k(H_1, x) \neq m_k(H_2, x)$.
\end{theorem}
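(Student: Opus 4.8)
The plan is to construct $H_1$ and $H_2$ by carefully gluing together two linear $k$-paths, generalizing the equality $\calC(P_{\ell+1}\cup P_{\ell-1},\ell)=\calC(P_\ell\cup P_\ell,\ell)$ of Spinoza and West. Set $n = qk$ for a positive integer $q$, so $\lfloor\frac{k-1}{k}n\rfloor = (k-1)q$, and we are trying to match all induced sub-$k$-graphs on $(k-1)q$ vertices while keeping the matching polynomials distinct. The natural candidates are: take $H_2$ to be the disjoint union of two linear $k$-paths each on $\tfrac{n}{2}$ vertices (assuming first that $\tfrac{n}{2}$ is a multiple of $k$, i.e.\ $q$ even; the odd case is handled by a small asymmetric variant), and take $H_1$ to be the disjoint union of two linear $k$-paths, one slightly longer and one slightly shorter, where ``slightly'' means by exactly one hyperedge, i.e.\ by $k-1$ vertices on each side. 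One should first verify the easy half: $m_k(H_1,x)\neq m_k(H_2,x)$. This follows from the recursive formula (\Cref{lem: recursive formula}) applied along a linear $k$-path: the matching polynomial of a disjoint union is the product, and the matching polynomials of linear $k$-paths of different lengths satisfy a simple linear recurrence (essentially a Chebyshev-type recurrence in the variable $x^k$), so the product $m_k(P)\cdot m_k(P')$ depends on the pair of path lengths in a way that distinguishes $\{\ell+1,\ell-1\}$ from $\{\ell,\ell\}$ --- concretely, comparing the coefficient of the second-highest power of $x$ (the total number of hyperedges) already detects a difference once we look one coefficient deeper, since the number of matchings of a fixed size differs.

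The substantive half is the multiset identity $\calC(H_1,(k-1)q)=\calC(H_2,(k-1)q)$. Here I would produce an explicit bijection between induced sub-$k$-graphs on $(k-1)q$ vertices. The key structural observation is that an induced sub-$k$-graph of a disjoint union of two linear $k$-paths on a prescribed total number of vertices is determined by how many vertices are taken from each path \emph{and which ones}; and an induced subgraph of a linear $k$-path is itself a disjoint union of shorter linear $k$-paths (plus isolated vertices), governed by which ``gaps'' between consecutive hyperedges get broken. So the task reduces to a bijection at the level of these combinatorial parameters. Following the idea behind the Spinoza--West identity, I would ``cut'' both path-pairs at a carefully chosen spot and ``re-glue'' the four resulting segments crosswise: a choice of $(k-1)q$ vertices in $H_1 = P \cup P'$ with $P$ the longer and $P'$ the shorter path gets sent to a choice in $H_2 = Q\cup Q'$ by transferring the initial segment of one path over to the other, the transfer being length-matched so that the induced isomorphism type is preserved. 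The point is that removing one hyperedge's worth of vertices ($k-1$ of them) from a path of length $\ell+1$ and from a path of length $\ell-1$ leaves, across all ways of distributing the deficit, exactly the same multiset of configurations as doing so for two paths of length $\ell$, provided we only ever delete $q$ hyperedges' worth in total --- and deleting fewer than the ``critical'' amount is precisely what guarantees the cut point always exists.

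The main obstacle I anticipate is making the bijection genuinely well-defined and uniformly so across \emph{all} relevant subsets, not just generic ones: boundary cases where the selected vertex set sits entirely inside one path, or where it straddles the glue point in a degenerate way, are where the count $(k-1)q$ (rather than $(k-1)q+1$) is exactly sharp, and one must check the cut-and-reglue operation is reversible there. In particular, one has to confirm that at exactly one more vertex the bijection breaks --- i.e.\ that $\calC(H_1,(k-1)q+1)\neq\calC(H_2,(k-1)q+1)$, which is automatic from \Cref{thm:main} but serves as a sanity check on the construction. A secondary technical point is the parity of $q$: when $\tfrac{n}{2}$ is not divisible by $k$ one cannot split $H_2$ into two equal linear $k$-paths that are themselves ``balanced,'' so the construction must be adjusted (for instance, by choosing the two paths of $H_2$ to differ in length by the smallest amount compatible with divisibility and compensating in $H_1$), and the bijection argument has to be re-examined for that variant. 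I expect the linear-ordering bookkeeping in \Cref{def:conflict-free} to play no role here --- this section is purely about the matching polynomial and induced subgraphs --- so the proof should be self-contained combinatorics once the gluing is set up correctly.
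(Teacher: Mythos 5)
Your construction is not the paper's, and unfortunately it does not work for $k\ge 3$. The paper's examples $AT(k,\ell)$ and $MT(k,\ell)$ are obtained by \emph{identifying} the $k-2$ internal vertices of the $i$-th edge of one linear $k$-path with those of the $i$-th edge of another, so the two paths overlap heavily; your proposal keeps the two linear $k$-paths disjoint, which is only the degenerate $k=2$ shape of the construction. The reason the overlap is essential is quantitative: the Spinoza--West cut-and-reglue bijection for two disjoint paths only survives up to roughly \emph{half} the vertices, whereas the theorem needs agreement at the much larger fraction $\frac{k-1}{k}$. Concretely, let $L_m$ denote the linear $3$-path with $m$ edges and take $H_1=L_3\cup L_1$, $H_2=L_2\cup L_2$ (your candidates with $n=10$, $t=\lfloor\frac{2}{3}\cdot 10\rfloor=6$). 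A direct count of the induced sub-$3$-graphs on $6$ vertices whose isomorphism type is ``one edge plus three isolated vertices'' gives $115$ for $H_1$ but $112$ for $H_2$ (correspondingly $81$ versus $84$ edgeless ones), so $\calC(H_1,6)\neq\calC(H_2,6)$. The selections that break your bijection are exactly the non-generic ones you flag as ``the main obstacle'': when the chosen set meets every edge in all but one vertex there is no cut point left to transfer, and at density $\frac{k-1}{k}$ such selections are unavoidable. A secondary problem is that a disjoint union of two linear $k$-paths has $(a+b)(k-1)+2$ vertices, which for $k\ge 3$ is usually not attainable when $n$ is required to be divisible by $k$ (e.g.\ it is always even for $k=3$, so $n=9,15,\dots$ are impossible), so the construction cannot even be instantiated for the cases the theorem covers without further padding.

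By contrast, the paper glues the two paths so that each ``tooth'' $\{u_t,c_t^1,\dots,c_t^{k-2},b_t\}$ has $k$ vertices of which the budget allows selecting $k-1$ on average; the bijection $\eta$ then either flips everything past the first deficient tooth or, in the fully saturated case, uses the forced omission of the two end vertices $u_\ell,b_\ell$. The distinctness of the matching polynomials is also cleaner there: depending on the parity of $\ell$, exactly one of $AT(k,\ell)$, $MT(k,\ell)$ has a perfect matching. If you want to salvage your approach, you would need to show your disjoint-union pair agrees at some threshold and then argue that threshold equals $\lfloor\frac{k-1}{k}n\rfloor$ --- but the computation above shows it does not.
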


To prove \Cref{thm:construction}, we construct two $k$-graphs that we refer to \emph{aligned-teeth} and \emph{misalighned-teeth} by suitably identifying non-branching vertices of two linear $k$-paths. They have almost the same structure but are slightly different.
Let $k\ge 2$ and $\ell\ge 0$ be integers. 

\begin{figure}[h!]
    \centering
    \begin{tikzpicture}
        \tikzstyle{v}=[circle, draw, solid, fill=black, inner sep=0pt, minimum width=5pt]
        \tikzstyle{slant}=[draw=red,line width=1pt,preaction={clip, postaction={pattern=north west lines, pattern color=red}}]
        
        \begin{scope}
            \coordinate (b1) at (1,0);
            \coordinate (b2) at (2,0);
            \coordinate (b3) at (3,0);
            \coordinate (b4) at (4,0);

            \coordinate (c0) at (0.5,0.8);
            \coordinate (c1) at (1.5,0.8);
            \coordinate (c2) at (2.5,0.8);
            \coordinate (c3) at (3.5,0.8);
            \coordinate (c4) at (4.5,0.8);

            \coordinate (u0) at (0,1.6);
            \coordinate (u1) at (1,1.6);
            \coordinate (u2) at (2,1.6);
            \coordinate (u3) at (3,1.6);
            \coordinate (u4) at (4,1.6);
            \coordinate (u5) at (5,1.6);

            \draw[slant]
                (b1)--(c1)--(b2)--cycle
                (b2)--(c2)--(b3)--cycle
                (b3)--(c3)--(b4)--cycle

                (u0)--(c0)--(u1)--cycle
                (u1)--(c1)--(u2)--cycle
                (u2)--(c2)--(u3)--cycle
                (u3)--(c3)--(u4)--cycle
                (u4)--(c4)--(u5)--cycle;

            \node[v] at (b1) {};
            \node[v] at (b2) {};
            \node[v] at (b3) {};
            \node[v] at (b4) {};

            \node[v] at (c0) {};
            \node[v] at (c1) {};
            \node[v] at (c2) {};
            \node[v] at (c3) {};
            \node[v] at (c4) {};

            \node[v] at (u0) {};
            \node[v] at (u1) {};
            \node[v] at (u2) {};
            \node[v] at (u3) {};
            \node[v] at (u4) {};
            \node[v] at (u5) {};

            \node[above] at (u0) {$u_0$};
            \node[above] at (u1) {$u_1$};
            \node[above] at (u2) {$u_2$};
            \node[above] at (u3) {$u_3$};
            \node[above] at (u4) {$u_4$};
            \node[above] at (u5) {$u_5$};

            \node[below] at (b1) {$b_1$};
            \node[below] at (b2) {$b_2$};
            \node[below] at (b3) {$b_3$};
            \node[below] at (b4) {$b_4$};

            \node[right] at (c0) {$c_0^1$};
            \node[right] at (c1) {$c_1^1$};
            \node[right] at (c2) {$c_2^1$};
            \node[right] at (c3) {$c_3^1$};
            \node[right] at (c4) {$c_4^1$};

            \node () at (2.5,-0.8) {algined};
        \end{scope}

        \begin{scope}[xshift=6.7cm]
            \coordinate (b1) at (1,0);
            \coordinate (b2) at (2,0);
            \coordinate (b3) at (3,0);
            \coordinate (b4) at (4,0);
            \coordinate (b5) at (5,0);

            \coordinate (c0) at (0.5,0.8);
            \coordinate (c1) at (1.5,0.8);
            \coordinate (c2) at (2.5,0.8);
            \coordinate (c3) at (3.5,0.8);
            \coordinate (c4) at (4.5,0.8);

            \coordinate (u0) at (0,1.6);
            \coordinate (u1) at (1,1.6);
            \coordinate (u2) at (2,1.6);
            \coordinate (u3) at (3,1.6);
            \coordinate (u4) at (4,1.6);

            \draw[slant]
                (b1)--(c1)--(b2)--cycle
                (b2)--(c2)--(b3)--cycle
                (b3)--(c3)--(b4)--cycle
                (b4)--(c4)--(b5)--cycle

                (u0)--(c0)--(u1)--cycle
                (u1)--(c1)--(u2)--cycle
                (u2)--(c2)--(u3)--cycle
                (u3)--(c3)--(u4)--cycle;

            \node[v] at (b1) {};
            \node[v] at (b2) {};
            \node[v] at (b3) {};
            \node[v] at (b4) {};
            \node[v] at (b5) {};

            \node[v] at (c0) {};
            \node[v] at (c1) {};
            \node[v] at (c2) {};
            \node[v] at (c3) {};
            \node[v] at (c4) {};

            \node[v] at (u0) {};
            \node[v] at (u1) {};
            \node[v] at (u2) {};
            \node[v] at (u3) {};
            \node[v] at (u4) {};

            \node[above] at (u0) {$u_0$};
            \node[above] at (u1) {$u_1$};
            \node[above] at (u2) {$u_2$};
            \node[above] at (u3) {$u_3$};
            \node[above] at (u4) {$u_4$};

            \node[below] at (b1) {$b_1$};
            \node[below] at (b2) {$b_2$};
            \node[below] at (b3) {$b_3$};
            \node[below] at (b4) {$b_4$};
            \node[below] at (b5) {$b_5$};

            \node[right] at (c0) {$c_0^1$};
            \node[right] at (c1) {$c_1^1$};
            \node[right] at (c2) {$c_2^1$};
            \node[right] at (c3) {$c_3^1$};
            \node[right] at (c4) {$c_4^1$};

            \node () at (2.5,-0.8) {misalgined};
        \end{scope}
    \end{tikzpicture}
    \caption{Aligned/misaligned-$(3,4)$-teeth.}
    \label{fig: k-teeth}
\end{figure}
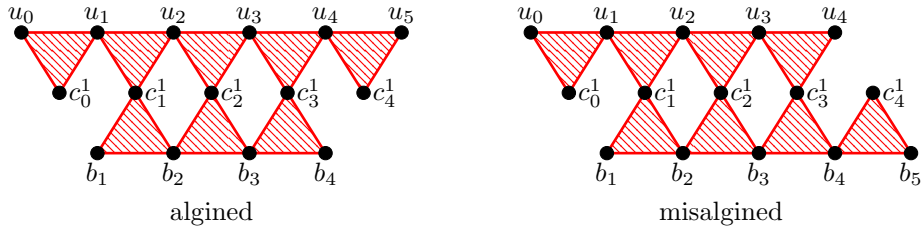

\begin{definition}
    An \emph{aligned-$(k,\ell)$-teeth} $AT(k,\ell)$ is a $k$-graph such that the vertex set is
    \[
        \{u_0,\ldots,u_{\ell+1}\}
        \cup
        \bigcup_{i=0}^{\ell} \{c_i^1, \ldots,c_i^{k-2}\}
        \cup 
        \{b_1,\ldots,b_\ell\},
    \]
    and the edge set is 
    \[
        \{e_0,\ldots,e_{\ell}\}
        \cup
        \{f_1,\ldots,f_{\ell-1}\},
    \]
    where $V(e_i) = \{u_{i},c_i^{1},\ldots,c_i^{k-2},u_{i+1}\}$
    and
    $V(f_j) = \{b_{j},c_j^{1},\ldots,c_j^{k-2},b_{j+1}\}$ for each $0 \leq i \leq \ell$ and $1 \leq j \leq \ell - 1$.
\end{definition}

\begin{definition}
    A \emph{misaligned-$(k,\ell)$-teeth} $MT(k,\ell)$ is a $k$-graph such that the vertex set is
    \[
        \{u_0,\ldots,u_{\ell}\}
        \cup
        \bigcup_{i=0}^{\ell} \{c_i^1, \ldots,c_i^{k-2}\}
        \cup 
        \{b_1,\ldots,b_{\ell+1}\},
    \]
    and the edge set is 
    \[
        \{e_0,\ldots,e_{\ell-1}\}
        \cup
        \{f_1,\ldots,f_{\ell}\},
    \]
    where $V(e_i) = \{u_{i},c_i^{1},\ldots,c_i^{k-2},u_{i+1}\}$
    and
    $V(f_j) = \{b_{j-1},c_j^{1},\ldots,c_j^{k-2},b_{j}\}$ for each $0 \le i \le \ell-1$ and $1 \leq j \leq \ell$.
\end{definition}

We visualize examples of aligned-teeth and misaligned-teeth in Figure~\ref{fig: k-teeth}.
Observe that $AT(k,\ell)$ and $MT(k,\ell)$ have $k(\ell+1)$ vertices. 
We also note that $AT(k,0)$ is a $k$-vertex $k$-graph with one hyperedge, and $MT(k,0)$ is an edgeless $k$-vertex $k$-graph.
In addition, $AT(k,1)$ is a $2k$-vertex $k$-graph with two edges sharing exactly one vertex, and $MT(k,1)$ is a $2k$-vertex $k$-graph with two vertex-disjoint edges.

To prove \Cref{thm:construction}, we need to show that $AT(k, \ell)$ and $MT(k, \ell)$ have the same multiset of induced sub-hypergraphs that are induced on $\frac{k-1}{k}$-fraction of the whole vertex set, but their matching polynomials are different. We first claim that the latter can be directly verified by considering the existence of a perfect matching.

\begin{lemma}\label{lem:teeth-diff}
    For all $k \geq 2$ and $\ell \geq 0$, we have the following.
    \[
        m_k(AT(k, \ell)) \neq m_k(MT(k, \ell)).
    \]
\end{lemma}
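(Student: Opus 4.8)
The plan is to distinguish the two matching polynomials by their constant terms. Both $AT(k,\ell)$ and $MT(k,\ell)$ have exactly $k(\ell+1)$ vertices, so the constant term of $m_k(H,x)$ for either of them equals $(-1)^{\ell+1}p(H,\ell+1)$, where $p(H,\ell+1)$ counts the perfect matchings of $H$. Hence it suffices to show that exactly one of $AT(k,\ell)$ and $MT(k,\ell)$ has a perfect matching. I will in fact prove the sharper statement that $AT(k,\ell)$ has a perfect matching if and only if $\ell$ is even, that $MT(k,\ell)$ has one if and only if $\ell$ is odd, and that the perfect matching is unique whenever it exists; since exactly one parity occurs, this yields $p(AT(k,\ell),\ell+1)\ne p(MT(k,\ell),\ell+1)$ and therefore $m_k(AT(k,\ell),x)\ne m_k(MT(k,\ell),x)$.

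For the generic case $k\ge 3$, $\ell\ge 1$, the argument rests on the observation that each ``core'' block lies in very few edges. In both $AT(k,\ell)$ and $MT(k,\ell)$ the vertices $c_0^1,\dots,c_0^{k-2}$ lie in the single edge $e_0$, which is therefore forced into any perfect matching $M$; for each intermediate index $1\le i\le\ell-1$ the vertices $c_i^1,\dots,c_i^{k-2}$ lie in exactly the two edges $e_i$ and $f_i$, so precisely one of these lies in $M$; and at the far end the vertices $c_\ell^1,\dots,c_\ell^{k-2}$ lie in the single edge $e_\ell$ in $AT(k,\ell)$ but in the single edge $f_\ell$ in $MT(k,\ell)$. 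I would record these choices as a sequence of labels over the positions $0,1,\dots,\ell$, with label $E$ at position $i$ meaning ``$e_i\in M$'' and label $F$ meaning ``$f_i\in M$''; position $0$ is then forced to be $E$, while position $\ell$ is forced to be $E$ for $AT$ and $F$ for $MT$. Because consecutive top edges $e_i,e_{i+1}$ share a $u$-vertex and consecutive bottom edges $f_i,f_{i+1}$ share a $b$-vertex, while $e_i$ and $f_j$ are disjoint for $i\ne j$, a set of chosen edges is pairwise disjoint exactly when its label sequence has no two equal consecutive entries, i.e., strictly alternates. An alternating sequence with the prescribed endpoint labels exists iff $\ell$ is even in the $AT$ case and iff $\ell$ is odd in the $MT$ case, and when it exists it is unique, which forces $M$. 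A short check that the resulting edge set $\{e_0,f_1,e_2,f_3,\dots\}$ meets every $u$-, $b$-, and $c$-vertex confirms that it is a perfect matching.

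The leftover cases are immediate. When $\ell=0$, $AT(k,0)$ is a single edge on $k$ vertices (one perfect matching) while $MT(k,0)$ is edgeless on $k$ vertices (none). When $k=2$ there are no core vertices, and $AT(2,\ell)=P_{\ell+2}\cup P_\ell$ while $MT(2,\ell)=P_{\ell+1}\cup P_{\ell+1}$, so the conclusion follows from the classical fact that $P_m$ has a (unique) perfect matching iff $m$ is even.

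The only place demanding care is the middle step: correctly pinning down which edges are forced at the two ends --- this is exactly where the asymmetry between $AT$ and $MT$, and hence the parity flip, comes from --- and verifying that the forced alternating edge set is both a matching and perfect. All of this is routine once the core-degree observation is in hand, so I do not foresee a substantive obstacle.
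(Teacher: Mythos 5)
Your proposal is correct and takes essentially the same route as the paper: the paper's proof is exactly the observation that $AT(k,\ell)$ has a perfect matching if and only if $\ell$ is even while $MT(k,\ell)$ has one if and only if $\ell$ is odd, so the constant terms of the two matching polynomials differ. Your write-up simply supplies the forcing/alternation details (and the $k=2$, $\ell=0$ degenerate cases) that the paper leaves implicit.
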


\begin{proof}[Proof of \Cref{lem:teeth-diff}]
    We observe that if $\ell$ is even, then $AT(k,\ell)$ has a perfect matching but $MT(k,\ell)$ does not. Similarly, if $\ell$ is odd, then $MT(k,\ell)$ has a perfect matching but $AT(k,\ell)$ does not. 
    This implies the matching polynomials of $AT(k, \ell)$ and $MT(k, \ell)$ cannot be the same.
\end{proof}

We now prove \Cref{thm:construction}.

\begin{proof}[Proof of \Cref{thm:construction}]
    Let $\ell := \frac{n}{k} -1 \ge 0$ be an integer, and let $H_1 := AT(k,\ell)$ and $H_2 := MT(k,\ell)$ be $k$-graphs. Note that $|V(H_1)| = |V(H_2)| = n$ and $\lfloor \frac{k-1}{k}n \rfloor = (k-1)(\ell + 1)$.
    By \Cref{lem:teeth-diff}, we have $m_k(H_1,x) \ne m_k(H_2,x)$.
    Thus to finish the proof, it suffices to show that $\calC(H_1, \lfloor \frac{k-1}{k} n \rfloor) = \calC(H_2, \lfloor \frac{k-1}{k} n \rfloor)$.
    We verify this by presenting an explicit bijection map $\eta: \binom{V(H_1)}{\lfloor \frac{k-1}{k}n \rfloor} \to \binom{V(H_2)}{\lfloor \frac{k-1}{k}n \rfloor}$ such that for every vertex subset $J\subseteq V(H_1)$ with size $\lfloor \frac{k-1}{k}n \rfloor$, we have $H_1[J] \cong H_2[\eta(J)]$. Since $\eta$ is a bijection, this implies $\calC(H_1, \lfloor \frac{k-1}{k}n \rfloor) = \calC(H_2, \lfloor \frac{k-1}{k}n \rfloor)$. 
    We will build such a bijection $\eta$ by using the following map between $V(AT(k, \ell))$ and $V(MT(k, \ell))$.

    Let $\phi: V(AT(k,\ell)) \to V(MT(k, \ell))$ be a map defined as follows.
    For a vertex $v\in V(AT(k,\ell))$,
    \[
        \phi(v) :=
        \begin{cases}
            b_i & \text{if } v=u_i \text{ for $i\ge 1$}, \\
            u_i & \text{if } v=b_i, \\
            v & \text{otherwise}.
        \end{cases}
    \]
    Hence, the map $\phi$ flips all vertices of $AT(k,\ell)$ upside down except for $u_0$, and $\phi$ restricted to $V(AT(k,\ell) - u_0) $ is an isomorphism from $AT(k,\ell) - u_0$ to $MT(k,\ell) - u_0$.

    Fix a $\lfloor \frac{k-1}{k}n \rfloor$-vertex subset $J$ of $V(H_1)$. We define $\eta(J)$ as follows.
    Let $X_1 := V(e_0)\setminus \{u_1\}$ and $X_2 := V(e_{\ell+1}) - \{u_\ell\}$.

    \begin{description}
        \item[Case 1.] $X_1 \not\subseteq J$. Define 
        \[
            \eta(J) := X_1 \cup \phi(J\setminus X_1).
        \]

        \item[Case 2.] $X_1 \subseteq J$ and
        $X_2 \not\subseteq J$. Define 
        \[
            \eta(J) := (J\setminus X_2) \cup \phi(X_2)
        \]
    \end{description}

    In both Cases 1 and 2, it is obvious that $H_1[J] \cong H_2[\eta(J)]$.

    \begin{description}
        \item[Case 3.] $X_1 \subseteq J$ and
        $X_2 \subseteq J$.

        \begin{description}
            \item[Subcase 3a.]
            There is $t \in [\ell-1]$ such that $|\{u_t, c_t^1,\ldots,c_t^{k-2}, b_t\} \cap V(J)| \le k-2$.
            We take the minimum $t$.
            Let $Y := \{u_0,\ldots,u_{t}\} \cup \bigcup_{i=0}^t \{c_i^{j}:1\le j \le k-2\} \cup \{b_1,\ldots,b_{t}\}$.
            Define 
            \[
                \eta(J) 
                := (J\cap Y) \cup \phi(J\setminus Y)
            \]
            Let $C := H_1[J\cap Y]$ and $D:= H_1[J\setminus Y]$.
            Then $H_1[J]$ is the union of two disjoint hypergraphs $C$ and $D$, and $H_2[\eta(J)]$ is the union of two disjoint hypergraphs $C$ and $\phi(D)$.
            Thus, $H_1[J] \cong H_2[\eta(J)]$ because $D\cong \phi(D)$.
            
            \item[Subcase 3b.]
            $|\{u_t, c_t^1,\ldots,c_t^{k-2}, b_t\} \cap J| \ge k-1$ for all $t\in[\ell-1]$.
            Then
            \[
            |J\setminus \{u_\ell,b_\ell\}| =
            |X_1| + |X_2| + \sum_{t=1}^{\ell-1} |\{u_t, c_t^1,\ldots,c_t^{k-2}, b_t\} \cap J|
            \ge 
            (k-1)(\ell+1).
            \]
            Therefore, $J \cap \{u_\ell, b_\ell\} = \emptyset$.
            Define 
            \[
                \eta(J) := 
                (J\setminus X_2) \cup \phi(X_2).
            \]
            As before, $H_1[J] \cong H_2[\eta(J)]$.
        \end{description}
    \end{description}
    By our construction of $\eta$, we have $H_1[J] \cong H_2[\eta (J)]$ for all $J \in \binom{V(H_1)}{\lfloor \frac{k-1}{k}n \rfloor}$, and the map $\eta$ is bijective.
\end{proof}

\begin{remark}
    For the case $k=2$, the proof of \Cref{thm:construction} is still valid if we attach an isolated vertex for each of $H_1$ and $H_2$ as $\lfloor \frac{2\ell + 1}{2} \rfloor = \lfloor \frac{2\ell}{2} \rfloor$.
    Thus, \Cref{thm:construction} holds for all $n\ge k= 2$.
\end{remark}





\section{Concluding remarks}\label{sec:concluding}

In this article, we proved the hypergraph generalization of \Cref{thm:godsil-reconstruction} and also provided examples that our result is sharp. 
By using \Cref{thm:main}, we also deduced \Cref{cor:graph-factor} which says one can reconstruct $m_F(G, x)$ from $\calC(G, \lfloor \frac{k-1}{k}n \rfloor + 1)$, where $F$ and $G$ are a $k$-vertex graph and an $n$-vertex graph, respectively. Unlike \Cref{thm:main}, we do not know whether this number $\lfloor \frac{k-1}{k}n \rfloor + 1$ is essentially tight. Hence, we raise the following problem.

\begin{problem}\label{prob:graph-factor}
    For a given graph $F$, determine the minimum constant $0 \leq c_F < 1$ that satisfies the following. For each $\varepsilon > 0$, there exists $n_0  = n_0(F, \varepsilon)$ such that for all $n \geq n_0$, the $F$-tiling polynomial of an $n$-vertex graph $G$ is uniquely determined from $\calC(G, \lfloor (c_F + \varepsilon)n \rfloor)$.
\end{problem}
Note that one can observe the number $c_F$ exists without much effort. We remark that \Cref{cor:graph-factor} provides an upper bound $c_F \leq \frac{|V(F)| - 1}{|V(F)|}$, and $c_{F} = 1/2$ if $F=K_2$, which follows from \Cref{thm:godsil-reconstruction,thm:construction}.

\Cref{thm:godsil-reconstruction,thm:main} and \Cref{cor:graph-factor} state that one can reconstruct the number of certain spanning structures of a given graph or hypergraph from the multiset of induced sub-(hyper)graphs of $(1 - c)n$ vertices for some $c > 0$. However, these spanning structures are highly disconnected. We believe these phenomena are still true for several connected spanning structures; for example, the number of Hamilton cycles. Hence, we leave this as a conjecture.

\begin{conjecture}\label{conj:Hamilton}
    There are constants $0 \leq c < 1$ and $n_0 \geq 0$ that satisfy the following for all integers $n \geq n_0$. The number of Hamilton cycles of an $n$-vertex graph $G$ is uniquely determined from $\calC(G, \lfloor cn \rfloor)$.
\end{conjecture}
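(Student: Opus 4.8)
The plan is to attack \Cref{conj:Hamilton} through a walk-counting inclusion--exclusion formula for Hamilton cycles, pushing the same ``closed walks live in few cards'' philosophy that drives the proof of \Cref{thm:main}. Fix a vertex $r$ of $G$, and for $U \subseteq V(G)$ with $r \in U$ let $w_n(U)$ be the number of closed walks of length $n$ from $r$ that stay inside $G[U]$; this is the $(r,r)$-entry of $A(G[U])^n$, hence a function of the induced subgraph $G[U]$ alone. A closed walk of length exactly $n$ from $r$ that visits every vertex must visit each vertex exactly once, so by M\"obius inversion over the Boolean lattice one obtains $2\,\mathrm{Ham}(G) = \sum_{r \in U \subseteq V(G)} (-1)^{n-|U|} w_n(U)$, where $\mathrm{Ham}(G)$ denotes the number of Hamilton cycles. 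The first task is thus to reconstruct the closed-walk numbers $w_n(U)$, or suitable aggregates of them, from the deck $\calC(G,\lfloor cn\rfloor)$.

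For the small-support part this is immediate: exactly as in the double-counting step of the proof of \Cref{thm:main}, a closed walk whose vertex support has size $m \le \lfloor cn\rfloor$ lies in precisely $\binom{n-m}{\lfloor cn\rfloor - m}$ cards, so the total number of length-$n$ closed walks with support size $m$ is a Kelly-type reconstructible invariant of $\calC(G,\lfloor cn\rfloor)$. Regrouping the inclusion--exclusion sum by the support $X := \mathrm{supp}(W)$ of the walk rather than by $U$, the alternating sum $\sum_{X \subseteq U \subseteq V}(-1)^{n-|U|}$ telescopes to $0$ unless $X = V$; thus only the full-support walks survive, and these are exactly the Hamilton traversals. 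This reorganization makes the obstacle explicit: the quantity we must extract is carried entirely by walks of support $n$, which are invisible in any $\lfloor cn\rfloor$-card.

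To break this deadlock I would replace ``delete to a subset and count closed walks'' by ``cut the cycle into short paths.'' Choosing $s := n - \lfloor cn\rfloor$ vertices $S$ on a Hamilton cycle $C$ and deleting them leaves a spanning linear forest of the $\lfloor cn\rfloor$-card $G[V\setminus S]$, whose number of path components equals the number of maximal runs of $S$ around $C$. Counts of spanning linear forests of a card, refined by the number of paths and by the isomorphism types of their endpoints, are readable from $\calC(G,\lfloor cn\rfloor)$ because each card is given in full. The remaining, and decisive, input is the number of ways to \emph{reclose} such a forest into a Hamilton cycle by threading the deleted set $S$ through the gaps; this depends on the adjacencies between $V\setminus S$ and $S$ and within $S$. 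I would try to recover these closing counts by summing over all choices of $S$ and setting up a linear system---in the spirit of the binomial double-counting identity in the proof of \Cref{thm:main}---that expresses $\mathrm{Ham}(G)$ in terms of forest-and-boundary statistics aggregated over the whole deck, hoping for the same sign/multiplicity cancellation that made the matching case collapse to reconstructible data.

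The hard part is precisely this reclosing step, and it is where the matching method has no analogue. In \Cref{thm:main} every contributing walk has support at most $\tfrac{k-1}{k}\ell+1$, so for $\ell \le n$ it fits inside a $(\lfloor\tfrac{k-1}{k}n\rfloor+1)$-card; a Hamilton cycle has support $n$ and fits in no $\lfloor cn\rfloor$-card for any fixed $c<1$, so the cross-cut adjacency information genuinely lies outside any single card and must be synthesised from overlaps of many cards. A sensible staged plan is to first settle the regime with $c$ close to $1$, refining Tutte's reconstruction of $\mathrm{Ham}(G)$ from the full $(n-1)$-deck down to an $(n-t)$-deck for fixed $t$ by controlling how a Hamilton cycle meets a bounded deleted set, and then to attempt a constant fraction, perhaps first for graphs of bounded maximum degree, where the number of candidate reclosings of a given linear forest is polynomially controlled and the resulting linear system is more likely to be invertible. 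I expect that turning the heuristic cancellation into an exact identity---certifying that the unknown cross-cut contributions either cancel or are themselves determined by the deck---will be the principal difficulty, and that it will require an invariant genuinely finer than bounded-length closed-walk counts.
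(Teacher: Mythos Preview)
The statement you are attempting to prove is \Cref{conj:Hamilton}, which the paper presents as an \emph{open conjecture}: no proof is given, and the only supporting evidence cited is Tutte's result that the Hamilton-cycle count is reconstructible from $\calC(G,n-1)$. So there is no ``paper's own proof'' to compare against.

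Your proposal is not a proof either, and you are candid about this. The inclusion--exclusion computation you outline is correct and well known, but as you yourself observe, regrouping by support collapses the entire sum onto walks of support exactly $n$; these are precisely the Hamilton traversals, so the formula is a tautology rather than a reduction. The Kelly-type double counting that drives \Cref{thm:main} succeeds there because \Cref{lem: closed walk} bounds the support of every relevant closed walk below the card size; for Hamilton cycles no such bound can exist, so the analogy breaks at exactly the point where it would need to do work.

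Your ``cut and reclose'' plan is a reasonable heuristic direction, but the crucial step---recovering the adjacencies between $S$ and $V\setminus S$ and within $S$ from aggregates over the deck---is left entirely unspecified. You write that you would ``set up a linear system\ldots hoping for the same sign/multiplicity cancellation,'' and later that certifying this cancellation ``will be the principal difficulty.'' That is the gap: you have identified what information is missing from any single card but have supplied no mechanism, identity, or invertibility argument to synthesise it from the deck. As written, the proposal is a thoughtful research outline for attacking an open problem, not a proof of \Cref{conj:Hamilton}.
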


As a supporting result, Tutte~\cite{tutte-matching} proved that the number of Hamilton cycles in $n$-vertex graph $G$ can be reconstructible from $\calC(G, n-1)$. 

\section*{Acknowledgement}

This work began at the 2024 Korean Student Combinatorics Workshop. 
The authors would like to thank IBS DIMAG and Sang-il Oum for supporting the workshop.


\printbibliography

\end{document}